\newtheorem{theorem}{\bf Theorem}[section]
\newtheorem{lemma}[theorem]{\bf Lemma}
\newtheorem{prop}[theorem]{\bf Proposition}
\newtheorem{coro}[theorem]{\bf Corollary}
\newtheorem{defn}[theorem]{\bf Definition}
\newenvironment{proof}{\noindent{\em Proof:}}{\quad \hfill$\Box$\vspace{2ex}}
\def \bN {\Bbb N}
\def \bR {\Bbb R}
\def \bC {\Bbb C}
\def \bx {{\bf x}}
\def \bz {{\bf z}}
\def \cB {{\cal B}}
\def \cI {{\cal I}}
\def \cJ {{\cal J}}
\def \cL {{\cal L}}
\def \cM {{\cal M}}
\def \cE {{\cal E}}
\def \cN {{\cal N}}
\def \cR {{\cal R}}
\def \cW {{\cal W}}
\def \and {\, \mbox{\rm and}\, }
\def \span {\,{\rm span}\,}
\newcommand{\Rmnum}[1]{\expandafter\@slowromancap\romannumeral #1@}
\begin{document}
\title{\bf Vector-valued Reproducing Kernel Banach Spaces \\ with Applications to Multi-task Learning\thanks{This work was partially supported by the US National
Science Foundation under grant 0631541 and by Guangdong Provincial Government of China through the ``Computational Science Innovative Research Team" program.}}
\author{Haizhang Zhang\thanks{School of Mathematics and Computational Science and Guangdong Province Key Laboratory of Computational Science, Sun Yat-sen University, Guangzhou 510275, P. R. China. E-mail address: {\it zhhaizh2@sysu.edu.cn.} The research was accomplished while the author was visiting University of Michigan.}\quad and \quad Jun
Zhang\thanks{Department of Psychology, University of Michigan, Ann Arbor, MI 48109, USA. E-mail address: {\it junz@umich.edu}.}}
\date{}
\maketitle
\begin{abstract} Motivated by multi-task machine learning with Banach spaces, we propose the notion of vector-valued reproducing kernel Banach spaces (RKBS). Basic properties of the spaces and the associated reproducing kernels are investigated. We also present feature map constructions and several concrete examples of vector-valued RKBS. The theory is then applied to multi-task machine learning. Especially, the representer theorem and characterization equations for the minimizer of regularized learning schemes in vector-valued RKBS are established.

\noindent{\bf Keywords}: vector-valued reproducing kernel Banach spaces, feature maps, regularized learning, the representer theorem, characterization equations.
\end{abstract}

\section{Introduction}
\setcounter{equation}{0}

The purpose of this paper is to establish the notion of vector-valued reproducing kernel Banach spaces and demonstrate its applications to multi-task machine learning. Built on the theory of scalar-valued reproducing kernel Hilbert spaces (RKHS) \cite{Aronszajn}, kernel methods have been proven successful in single task machine learning \cite{CuckerSmale,EPP,ScSm,ShCr,Vapnik}. Multi-task learning where the unknown target function to be learned from finite sample data is vector-valued appears more often in practice. References \cite{EMP,MP2005} proposed the development of kernel methods for learning multiple related tasks simultaneously. The mathematical foundation used there was the theory of vector-valued RKHS \cite{BM,Pedrick}. Recent progresses in vector-valued RKHS can be found in \cite{CMPY,CDT,CDTU}. In such a framework, both the space of the candidate functions used for approximation and the output space are chosen as a Hilbert space.

There are some occasions where it might be desirable to select the space of candidate functions, the output space, or both as Banach spaces. Hilbert spaces constitute a special and limited class of Banach spaces. Any two Hilbert spaces over a common number field with the same dimension are isometrically isomorphic. By reaching out to other Banach spaces, one obtains more variety in geometric structures and norms that are potentially useful for learning and approximation. Moreover, training data might come with intrinsic structures that make them impossible or inappropriate to be embedded into a Hilbert space. Learning schemes based on features in a Hilbert space may not work well for them. Finally, in some applications, a Banach space norm is engaged for some particular purpose. A typical example is the linear programming regularization in coefficient based regularization for machine learning \cite{ScSm}, where the $\ell_1$ norm is employed to obtain sparsity in the resulting minimizer.

There have been considerable work in learning a single task with Banach spaces (see, for example, \cite{Bennett,CMR,DL,Gentile,HBS,KL,MP2004,MP2007,LB, Zhang,ZXZD}). The difficulty in mapping patterns into a Banach space and making use of these features for learning mainly lies in the lack of an inner product in Banach spaces. In particular, without an appropriate correspondence of the Riesz representation of continuous linear functionals, point evaluations do not have a kernel representation in these studies. Semi-inner products, a mathematical tool discovered by Lumer \cite{Lumer} for the purpose of extending Hilbert space type arguments to Banach spaces, seem to be a natural substitute for inner products in Banach spaces. An illustrative example is that we were able to extend the classical theory of frames and Riesz bases to Banach spaces via semi-inner products \cite{ZZacha}. Semi-inner products were first used to machine learning by Der and Lee \cite{DL} for the study of large margin classification by hyperplanes in a Banach space. With this tool, we established the notion of scalar-valued reproducing kernel Banach spaces (RKBS) and investigated regularized learning schemes in RKBS \cite{ZXZ,ZZjogo}. There has been increasing interest in the application of this new theory \cite{Zhdanov,Jorgensen,SongZhang,Sriperumbudur}.

We attempt to build a mathematical foundation for multi-task learning with Banach spaces. Specifically, we shall propose a definition of vector-valued RKBS and investigate its fundamental properties in the next section. Feature map representations and several concrete examples of vector-valued RKBS will be presented in Sections 3 and 4, respectively. In Section 5, we investigate regularized learning schemes in vector-valued RKBS.

\section{Definition and Basic Properties}
\setcounter{equation}{0}

We are concerned with spaces of functions from a fixed set to a vector space. We shall allow the space of functions and the range space both to be a Banach space. Our key tool in dealing with a general Banach space is the semi-inner product \cite{Giles,Lumer}. Recall that a semi-inner product on a Banach space $V$ is a function from $V\times V$ to $\bC$, denoted by $[\cdot,\cdot]_V$, such that for all $u,v,w\in V$ and $\alpha,\beta\in\bC$
\begin{enumerate}
\item (linearity with respect to the first variable) $[\alpha f+\beta g,h]_V=\alpha[f,h]_V+\beta[g,h]_V$;
\item (positivity) $[f,f]_V>0$ for $f\ne0$;
\item (conjugate homogeneity with respect to the second variable) $[f,\alpha g]_V=\overline{\alpha}[f,g]_V$;
\item (Cauchy-Schwartz inequality) $|[f,g]_V|\le [f,f]_V^{1/2}[g,g]_V^{1/2}$.
\end{enumerate}
A semi-inner product $[\cdot,\cdot]_V$ on $V$ is said to be {\it compatible} if
$$
[f,f]_V^{1/2}=\|f\|_V\mbox{ for all }f\in V,
$$
where $\|\cdot\|_V$ denotes the norm on $V$. Every Banach space has a compatible semi-inner product \cite{Giles,Lumer}. Let $[\cdot,\cdot]_V$ be a compatible semi-inner product on $V$. Then one sees by the Cauchy-Schwartz inequality that for each $f\in\cB$, the linear functional $f^*$ on $V$ defined by
\begin{equation}\label{dualitymapping}
f^*(g):=[g,f]_V,\ \ g\in V
\end{equation}
is bounded on $V$. In other words, $f^*$ lies in the dual space $\cB^*$ of $\cB$. Moreover, we have
\begin{equation}\label{equalnormduality}
\|f^*\|_{V^*}=\|f\|_V
\end{equation}
and
\begin{equation}\label{dualityrelation}
f^*(f)=\|f\|_V\|f^*\|_{V^*}.
\end{equation}
Introduce the {\it duality mapping} $\cJ_V$ from $V$ to $V^*$ by setting
$$
\cJ_V(f):=f^*,\ \ f\in V.
$$

We desire to represent the continuous linear functionals on the vector-valued RKBS to be introduced by the semi-inner product. However, the semi-inner product might not be able to fulfill this important role for an arbitrary Banach space. For instance, one verifies that the continuous linear functional
$$
\mu(g):=\sum_{j=1}^\infty (-1)^j\frac1{2^j}g\left(\frac1{2^j}\right),\ \ g\in C([0,1]).
$$
on $C([0,1])$ endowed with the usual maximum norm can not be represented as
$$
\mu(g)=[g,f],\ \ g\in C([0,1])
$$
for any compatible semi-inner product $[\cdot,\cdot]$ on $C([0,1])$ and any $f\in C([0,1])$.

The above example indicates that the duality mapping might not be surjective for a general Banach space. Other problems such as non-uniqueness of compatible semi-inner products and non-injectivity of the duality mapping may also occur. To overcome these difficulties, we shall focus on Banach spaces that are uniformly convex and uniformly Fr\'{e}chet
differentiable in this preliminary work on vector-valued RKBS. A Banach space $V$ is {\it uniformly convex} if for all
$\varepsilon>0$ there exists a $\delta>0$ such that
$$
\|f+g\|_V\le 2-\delta \mbox{ for all }f,g\in V\mbox{ with
}\|f\|_{V}=\|g\|_{V}=1\mbox{ and }\|f-g\|_V\ge\varepsilon.
$$
Uniform convexity ensures the injectivity of the duality mapping and the existence and uniqueness of the best approximation to a closed convex subset of $V$ \cite{Giles}. We also say that $V$ is {\it uniformly Fr\'{e}chet
differentiable} if for all $f,g\in V$
\begin{equation}\label{diffentiability}
\lim_{t\in\bR,\,t\to 0}\frac{\|f+tg\|_V-\|f\|_V}t
\end{equation}
exists and the limit is approached uniformly for all $f,g$ in the unit ball of $V$. If $V$ is uniformly Fr\'{e}chet
differentiable then it has a unique compatible semi-inner product \cite{Giles}. The differentiability (\ref{diffentiability}) of the norm is useful to derive characterization equations for the minimizer of regularized learning schemes in Banach spaces. For simplicity, we call a Banach space {\it uniform} if it is both uniformly convex and uniformly Fr\'{e}chet
differentiable. An analogue of the Riesz representation theorem holds for uniform Banach spaces.

\begin{lemma}\label{riesz}(Giles \cite{Giles})
Let $V$ be a uniform Banach space. Then it has a unique compatible semi-inner product $[\cdot,\cdot]_V$ and the duality mapping $\cJ_V$ is bijective from $V$ to $V^*$. In other words, for each $\mu\in V^*$ there exists a unique $f\in V$ such that
$$
\mu(g)=[g,f]_V\mbox{ for all }g\in V.
$$
In this case,
\begin{equation}\label{sipondual}
[f^*,g^*]_{\cB^*}:=[g,f]_\cB,\ \ f,g\in\cB
\end{equation}
defines a compatible semi-inner product on $\cB^*$.
\end{lemma}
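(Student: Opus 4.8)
The plan is to build the statement from three ingredients, two of which are already available from the discussion preceding the lemma (and attributable to Giles). First, since $V$ is uniformly Fr\'echet differentiable its norm is Gateaux differentiable, so $V$ is smooth and carries a \emph{unique} compatible semi-inner product $[\cdot,\cdot]_V$; this disposes of the uniqueness claim. Second, uniform convexity makes $V$ strictly convex, which I will use for injectivity of $\cJ_V$. The genuinely new work is the surjectivity of $\cJ_V$ (the Riesz-type representation) together with the verification that (\ref{sipondual}) defines a compatible semi-inner product on $V^*$.

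For injectivity, suppose $\cJ_V(f)=\cJ_V(g)$ with $f,g\ne 0$. Using the conjugate homogeneity $\cJ_V(\alpha f)=\overline\alpha\,\cJ_V(f)$ I may rescale so that $\|f\|_V=\|g\|_V=1$. From $f^*=g^*$ I get $f^*(g)=g^*(g)=\|g\|_V^2=1$ and likewise $f^*(f)=1$, so $f^*$ takes the value $1=\|f^*\|_{V^*}$ at the midpoint $(f+g)/2$; hence $\|f+g\|_V\ge 2$, and the triangle inequality forces $\|f+g\|_V=2$. Strict convexity then yields $f=g$, so $\cJ_V$ is injective and the representing element in the statement is unique once it exists.

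For surjectivity, fix $\mu\in V^*$; the case $\mu=0$ is handled by $f=0$, so assume $\mu\ne 0$. Uniform convexity makes $V$ reflexive (Milman--Pettis), so its closed unit ball is weakly compact and the weakly continuous functional $\mu$ attains its norm: there is $f_0$ with $\|f_0\|_V=1$ and $\mu(f_0)=\|\mu\|_{V^*}$. Set $f:=\|\mu\|_{V^*}\,f_0$. By (\ref{equalnormduality}) one has $\|f^*\|_{V^*}=\|f\|_V=\|\mu\|_{V^*}$, while $f^*(f_0)=[f_0,f]_V=\|\mu\|_{V^*}\|f_0\|_V^2=\|\mu\|_{V^*}$; thus $\mu$ and $f^*$ are two functionals of the same norm $\|\mu\|_{V^*}$ attaining it at the same sphere point $f_0$. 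I expect this to be the main obstacle, for it is exactly where both hypotheses enter in tandem: reflexivity produces the norm attainer $f_0$, and the smoothness coming from uniform Fr\'echet differentiability guarantees that the supporting functional at $f_0$ is unique, whence $\mu=f^*=\cJ_V(f)$.

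Finally, for the dual semi-inner product, bijectivity lets me write each $\phi,\psi\in V^*$ uniquely as $\phi=f^*$ and $\psi=g^*$, so (\ref{sipondual}) is well defined. The convenient observation is the identity $[\phi,\psi]_{V^*}=[g,f]_V=f^*(g)=\phi\bigl(\cJ_V^{-1}(\psi)\bigr)$: with $\psi$ fixed the last expression is manifestly linear in $\phi$, which delivers linearity in the first slot even though $\cJ_V$ itself is only conjugate homogeneous. Compatibility reads $[\phi,\phi]_{V^*}=[f,f]_V=\|f\|_V^2=\|f^*\|_{V^*}^2=\|\phi\|_{V^*}^2$ by (\ref{equalnormduality}), and positivity follows; conjugate homogeneity in the second slot comes from $\cJ_V^{-1}(\alpha\psi)=\overline\alpha\,\cJ_V^{-1}(\psi)$, and the Cauchy--Schwartz inequality transfers directly from that of $[\cdot,\cdot]_V$ using $\|\phi\|_{V^*}=\|f\|_V$ and $\|\psi\|_{V^*}=\|g\|_V$.
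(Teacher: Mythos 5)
Your proof is correct, and it matches the source: the paper states this lemma without proof, attributing it to Giles \cite{Giles}, and your reconstruction is precisely Giles's classical argument --- Milman--Pettis gives reflexivity from uniform convexity, hence $\mu$ attains its norm at a unit vector $f_0$ (after a scalar rotation so that $\mu(f_0)=\|\mu\|_{V^*}$), and the smoothness supplied by uniform Fr\'echet differentiability forces the supporting functional at $f_0$ to be unique, whence $\mu=\cJ_V(\|\mu\|_{V^*}f_0)$. Your treatment of (\ref{sipondual}) via the identity $[\phi,\psi]_{V^*}=\phi\bigl(\cJ_V^{-1}(\psi)\bigr)$ is also the right device: it is exactly what rescues linearity in the first slot despite the non-additivity of $\cJ_V$, and the remaining axioms transfer through the isometry (\ref{equalnormduality}) as you indicate. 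Only two trivial elisions are worth noting: in the injectivity step the normalization $\|f\|_V=\|g\|_V$ need not be arranged by rescaling, since it is automatic from $f^*=g^*$ and (\ref{equalnormduality}), and the degenerate cases ($f=0$ or $g=0$ in injectivity, $\mu=0$ in surjectivity) should be dispatched by the same isometry.
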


Let $V$ be a uniform Banach space. We shall always denote by $[\cdot,\cdot]_V$ the unique compatible semi-inner product on $V$. By Lemma \ref{riesz} and equation (\ref{equalnormduality}), the duality mapping is bijective and isometric from $V$ to $V^*$. It is also conjugate homogeneous by property 3 of semi-inner products. However, it is non-additive unless $V$ reduces to a Hilbert space. As a consequence, a compatible semi-inner product is in general conjugate homogeneous but non-additive with respect to its second variable. Namely,
$$
[f,g+h]_V\ne[f,g]_V+[f,h]_V
$$
in general.

We are ready to present the definition of vector-valued RKBS. Let $\Lambda$ be a Banach space which we shall sometimes call the {\it output space} and $X$ be a prescribed set which is usually called the {\it input space}. A space $\cB$ is called a {\it Banach space of $\Lambda$-valued functions} on $X$ if it consists of certain functions from $X$ to $\Lambda$ and the norm on $\cB$ is compatible with point evaluations in the sense that
$$
\|f\|_\cB=0\mbox{ if and only if }f(x)=0\mbox{ for all }x\in X.
$$
For instance, $L^p([0,1])$, $p\ge1$ is not a Banach space of functions while $C([0,1])$ is. We restrict our consideration to Banach spaces of functions so that point evaluations (usually referred to as ``sampling" in applications) are well-defined.

\begin{defn}
We call $\cB$ a $\Lambda$-valued RKBS on $X$ if both $\cB$ and $\Lambda$ are uniform and $\cB$ is a Banach space of functions from $X$ to $\Lambda$ such that for every $x\in X$, the point evaluation $\delta_x:\cB\to\Lambda$ defined by
$$
\delta_x(f):=f(x),\ \ f\in\cB
$$
is continuous from $\cB$ to $\Lambda$.
\end{defn}

We shall derive a reproducing kernel for so defined a vector-valued RKBS. Throughout the rest of the paper, we let $[\cdot,\cdot]_\cB$ and $[\cdot,\cdot]_\Lambda$ be the unique semi-inner product and $\cJ_\cB$ and $\cJ_\Lambda$ the associated duality mapping on $\cB$ and $\Lambda$, respectively. For two Banach spaces $V_1,V_2$, we denote by $\cM(V_1,V_2)$ the set of all the bounded operators from $V_1$ to $V_2$ and $\cL(V_1,V_2)$ the subset of $\cM(V_1,V_2)$ of those bounded operators that are also linear. When $V_1=V_2$, $\cM(V_1,V_2)$ is abbreviated as $\cM(V_1)$. For each $T\in\cM(V_1,V_2)$, we denote by $\|T\|_{\cM(V_1,V_2)}$ the greatest lower bound of all the nonnegative constants $\alpha$ such that
$$
\|T u\|_{V_2}\le \alpha\|u\|_{V_1}\mbox{ for all }u\in V_1.
$$
When $T$ is also linear, this quantity equals the operator norm $\|T\|_{\cL(V_1,V_2)}$ of $T$ in $\cL(V_1,V_2)$. In those languages, we require that the point evaluation $\delta_x$ on a $\Lambda$-valued RKBS on $X$ belong to $\cL(\cB,\Lambda)$ for all $x\in X$.

\begin{theorem}\label{reproducingkernelthm}
Let $\cB$ be a $\Lambda$-valued RKBS on $X$. Then there exists a unique function $K$ from $X\times X$ to $\cM(\Lambda)$ such that
\begin{description}
\item[(1)] $K(x,\cdot)\xi\in \cB$ for all $x\in X$ and $\xi\in\Lambda$,

\item[(2)] for all $f\in\cB$, $x\in X$, and $\xi\in \Lambda$
\begin{equation}\label{reproducing}
[f(x),\xi]_\Lambda=[f,K(x,\cdot)\xi]_\cB,
\end{equation}
\item[(3)] for all $x,y\in X$
\begin{equation}\label{isbounded}
\|K(x,y)\|_{\cM(\Lambda)}\le\|\delta_x\|_{\cL(\cB,\Lambda)}\|\delta_y\|_{\cL(\cB,\Lambda)}.
\end{equation}

\end{description}
\end{theorem}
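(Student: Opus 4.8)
The plan is to construct $K$ directly from the duality mapping on $\cB$, using the Riesz-type representation in Lemma \ref{riesz}. First I would fix $x\in X$ and $\xi\in\Lambda$ and consider the functional $\mu_{x,\xi}$ on $\cB$ defined by $\mu_{x,\xi}(f):=[f(x),\xi]_\Lambda$. Since $\delta_x\in\cL(\cB,\Lambda)$ and $[\cdot,\cdot]_\Lambda$ is linear in its first slot, $\mu_{x,\xi}$ is linear in $f$; the Cauchy--Schwartz inequality gives $|\mu_{x,\xi}(f)|\le\|f(x)\|_\Lambda\|\xi\|_\Lambda\le\|\delta_x\|_{\cL(\cB,\Lambda)}\|\xi\|_\Lambda\|f\|_\cB$, so $\mu_{x,\xi}\in\cB^*$ with $\|\mu_{x,\xi}\|_{\cB^*}\le\|\delta_x\|_{\cL(\cB,\Lambda)}\|\xi\|_\Lambda$. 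Because $\cB$ is uniform, Lemma \ref{riesz} supplies a unique element $g_{x,\xi}\in\cB$ with $\mu_{x,\xi}=\cJ_\cB(g_{x,\xi})$, that is, $[f,g_{x,\xi}]_\cB=[f(x),\xi]_\Lambda$ for all $f\in\cB$. I would then define $K$ by $K(x,y)\xi:=g_{x,\xi}(y)$, equivalently $K(x,\cdot)\xi:=g_{x,\xi}$.

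With this definition properties (1) and (2) are immediate: $K(x,\cdot)\xi=g_{x,\xi}\in\cB$ gives (1), and the defining identity of $g_{x,\xi}$ is exactly the reproducing relation (\ref{reproducing}), giving (2). For (3) I would use that $\cJ_\cB$ is isometric (Lemma \ref{riesz} with (\ref{equalnormduality})) to write $\|g_{x,\xi}\|_\cB=\|\mu_{x,\xi}\|_{\cB^*}\le\|\delta_x\|_{\cL(\cB,\Lambda)}\|\xi\|_\Lambda$, and then estimate $\|K(x,y)\xi\|_\Lambda=\|\delta_y(g_{x,\xi})\|_\Lambda\le\|\delta_y\|_{\cL(\cB,\Lambda)}\|g_{x,\xi}\|_\cB\le\|\delta_x\|_{\cL(\cB,\Lambda)}\|\delta_y\|_{\cL(\cB,\Lambda)}\|\xi\|_\Lambda$. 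Taking the supremum over $\xi$ yields (\ref{isbounded}) and in particular shows $K(x,y)\in\cM(\Lambda)$. For uniqueness, suppose $\tilde{K}$ also satisfies (1) and (2). Then for every $f$ we have $[f,K(x,\cdot)\xi]_\cB=[f(x),\xi]_\Lambda=[f,\tilde{K}(x,\cdot)\xi]_\cB$, so $\cJ_\cB(K(x,\cdot)\xi)$ and $\cJ_\cB(\tilde{K}(x,\cdot)\xi)$ coincide in $\cB^*$; injectivity of $\cJ_\cB$ forces $K(x,\cdot)\xi=\tilde{K}(x,\cdot)\xi$ in $\cB$, hence $K=\tilde{K}$.

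The point I expect to require the most care is the algebraic nature of the operator $K(x,y)$. Unlike the Hilbert-space case, $K(x,y)$ need not be linear: the second-slot non-additivity of the semi-inner product makes $\mu_{x,\xi}$ non-additive in $\xi$, and $\cJ_\cB^{-1}$ is likewise non-additive, so $g_{x,\xi+\eta}\ne g_{x,\xi}+g_{x,\eta}$ in general. This is precisely why the target space must be $\cM(\Lambda)$ rather than $\cL(\Lambda)$. I would, however, record the one structural property that does survive: since $[\cdot,\cdot]_\Lambda$ is conjugate homogeneous in its second slot and $\cJ_\cB^{-1}$ is conjugate homogeneous, the two conjugations cancel, giving $\mu_{x,\alpha\xi}=\overline{\alpha}\,\mu_{x,\xi}$ and hence $K(x,\cdot)(\alpha\xi)=\alpha K(x,\cdot)\xi$, so $K(x,y)$ is (positively) homogeneous. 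Verifying the membership $K(x,y)\in\cM(\Lambda)$ therefore reduces to the boundedness estimate already obtained in the proof of (3), and no linearity is needed or available.
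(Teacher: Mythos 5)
Your proposal is correct and follows essentially the same route as the paper: represent the bounded linear functional $f\mapsto[f(x),\xi]_\Lambda$ via Lemma \ref{riesz}, set $K(x,y)\xi:=g_{x,\xi}(y)$, and obtain the bound (\ref{isbounded}) by composing the estimate $\|K(x,\cdot)\xi\|_\cB\le\|\delta_x\|_{\cL(\cB,\Lambda)}\|\xi\|_\Lambda$ (you via the isometry of $\cJ_\cB$, the paper via the equivalent supremum characterization of the norm) with $\delta_y$. Your closing remarks on non-additivity and the homogeneity $K(x,\cdot)(\alpha\xi)=\alpha K(x,\cdot)\xi$ go slightly beyond the theorem and match what the paper records separately in Proposition \ref{basicproperties}, equation (\ref{homogeneouskxy}).
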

\begin{proof}
Let $x\in X$ and $\xi\in\Lambda$. As $\delta_x\in \cL(\cB,\Lambda)$, we see that
\begin{equation}\label{bounedlinearoncb}
\left|[f(x),\xi]_\Lambda\right|\le \|f(x)\|_\Lambda\|\xi\|_\Lambda\le \|\delta_x\|_{\cL(\cB,\Lambda)}\|f\|_\cB\|\xi\|_\Lambda.
\end{equation}
The above inequality together with the linearity of the semi-inner product with respect to its first variable implies that
$$
f\to [f(x),\xi]_\Lambda
$$
is a bounded linear functional on $\cB$. By Lemma \ref{riesz}, there exists a unique function $g_{x,\xi}\in\cB$ such that
\begin{equation}\label{reproducingkernelthmeq1}
[f(x),\xi]_\Lambda=[f,g_{x,\xi}]_\cB.
\end{equation}
Define a function $K$ from $X\times X$ to the set of operators from $\Lambda$ to $\Lambda$ by setting
$$
K(x,y)\xi:=g_{x,\xi}(y),\ \ x,y\in X,\ \xi\in\Lambda.
$$
Clearly, $K$ satisfies the two requirements (1) and (2). It is also unique by the uniqueness of the function $g_{x,\xi}$ satisfying (\ref{reproducingkernelthmeq1}). It remains to show that it is bounded. To this end, we get by (\ref{bounedlinearoncb}) that
$$
\|K(x,\cdot)\xi\|_\cB=\sup_{f\in\cB,\|f\|_\cB\le 1}\left|[f,K(x,\cdot)]_\cB\right|=\sup_{f\in\cB,\|f\|_\cB\le 1}\left|[f(x),\xi]_\Lambda\right|\le \|\delta_x\|_{\cL(\cB,\Lambda)}\|\xi\|_\Lambda.
$$
It follows that
$$
\|K(x,y)\xi\|_\cB\le \|\delta_y\|_{\cL(\cB,\Lambda)}\|K(x,\cdot)\xi\|_\cB\le \|\delta_x\|_{\cL(\cB,\Lambda)}\|\delta_y\|_{\cL(\cB,\Lambda)}\|\xi\|_\Lambda,
$$
which proves (\ref{isbounded}).
\end{proof}

We call the above function $K$ the {\it reproducing kernel} of $\cB$. It coincides with the usual reproducing kernel when $\cB$ is a Hilbert space and $\Lambda=\bC$, and with the vector-valued reproducing kernel when both $\cB$ and $\Lambda$ are Hilbert spaces. We explore basic properties of vector-valued RKBS and its reproducing kernels for further investigation and applications.

Let $(\delta_x)^*$ be the adjoint operator of $\delta_x$ for all $x\in X$. Denote for a Banach space $V$ by $(\cdot,\cdot)_V$ the bilinear form on $V\times V^*$ defined by
$$
(v,\mu)_V:=\mu(v),\ \ v\in V,\ \mu\in V^*.
$$
Thus, $(\delta_x)^*$ is define by
\begin{equation}\label{dualofdeltax}
(f,(\delta_x)^* \xi^*)_\cB=(\delta(x)(f),\xi^*)_\Lambda=(f(x),\xi^*)_\Lambda=[f(x),\xi]_\Lambda,\ \ f\in\cB,\ \xi\in\Lambda.
\end{equation}

\begin{prop}\label{basicproperties}
Let $\cB$ be a $\Lambda$-valued RKBS on $X$ and $K$ its reproducing kernel. Then there holds for all $x,y\in X$ and $\xi,\eta,\tau\in \Lambda$ that
\begin{equation}\label{property1}
[K(x,x)\xi,\xi]_\Lambda\ge0,\ |[K(x,y)\xi,\eta]_\Lambda|\le [K(x,x)\xi,\xi]^{1/2}_\Lambda[K(y,y)\eta,\eta]^{1/2}_\Lambda,
\end{equation}
\begin{equation}\label{property2}
\|K(x,y)\|_{\cM(\Lambda)}\le \|K(x,x)\|_{\cM(\Lambda)}^{1/2}\|K(y,y)\|_{\cM(\Lambda)}^{1/2},
\end{equation}
\begin{equation}\label{property3}
K(x,\cdot)\xi=\cJ_\cB^{-1}\, (\delta_x)^*\cJ_\Lambda (\xi),
\end{equation}
\begin{equation}\label{homogeneouskxy}
K(x,y)(\alpha\xi)=\alpha K(x,y)\xi\mbox{ for all }\alpha\in\bC,
\end{equation}
\begin{equation}\label{property4}
\|K(x,\cdot)\xi\|_\cB\le \|\delta_x\|_{\cL(\cB,\Lambda)}\|\xi\|_\Lambda,\ \ \|K(x,\cdot)\xi\|_\cB\le \|K(x,x)\|_{\cM(\Lambda)}^{1/2}\|\xi\|_\Lambda,
\end{equation}
\begin{equation}\label{property5}
(K(x,\cdot)\xi)^*+(K(x,\cdot)\eta)^*=(K(x,\cdot)\tau)^* \mbox{ whenever }\tau^*=\xi^*+\eta^*,
\end{equation}
\begin{equation}\label{property6}
\span\{(K(x,\cdot)\xi)^*:x\in X,\ \xi\in\Lambda\}\mbox{ is dense in }\cB^*.
\end{equation}
\end{prop}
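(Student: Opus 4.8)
The plan is to extract nearly everything from a single master identity obtained by feeding the kernel back into the reproducing relation (\ref{reproducing}). Taking $f=K(x,\cdot)\xi\in\cB$, which is legitimate by part (1) of Theorem \ref{reproducingkernelthm}, and evaluating at $x$, the reproducing property gives $[K(x,x)\xi,\xi]_\Lambda=[K(x,\cdot)\xi,K(x,\cdot)\xi]_\cB=\|K(x,\cdot)\xi\|_\cB^2$. This immediately yields the positivity in (\ref{property1}), and, combined with the Cauchy--Schwartz inequality and the operator-norm estimate $[K(x,x)\xi,\xi]_\Lambda\le\|K(x,x)\|_{\cM(\Lambda)}\|\xi\|_\Lambda^2$, the second inequality of (\ref{property4}); the first inequality of (\ref{property4}) is already contained in the proof of Theorem \ref{reproducingkernelthm}. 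For the mixed Cauchy--Schwartz bound in (\ref{property1}) I would rewrite, using $K(x,y)\xi=(K(x,\cdot)\xi)(y)$ and (\ref{reproducing}), the quantity $[K(x,y)\xi,\eta]_\Lambda=[K(x,\cdot)\xi,K(y,\cdot)\eta]_\cB$, apply the Cauchy--Schwartz inequality of $[\cdot,\cdot]_\cB$, and insert the master identity.

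For (\ref{property2}) I would use the compatible semi-inner product formula $\|v\|_\Lambda=\sup_{\|\eta\|_\Lambda\le1}|[v,\eta]_\Lambda|$ (immediate from Cauchy--Schwartz and the choice $\eta=v/\|v\|_\Lambda$). Writing $\|K(x,y)\|_{\cM(\Lambda)}=\sup_{\|\xi\|_\Lambda\le1}\sup_{\|\eta\|_\Lambda\le1}|[K(x,y)\xi,\eta]_\Lambda|$ and bounding the two factors produced by (\ref{property1}) by $\|K(x,x)\|_{\cM(\Lambda)}^{1/2}\|\xi\|_\Lambda$ and $\|K(y,y)\|_{\cM(\Lambda)}^{1/2}\|\eta\|_\Lambda$ then delivers the claim.

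The duality formula (\ref{property3}) is the structural heart and the key to (\ref{homogeneouskxy}) and (\ref{property5}). I would unwind the definitions: the functional $f\mapsto[f,K(x,\cdot)\xi]_\cB$ is by construction $\cJ_\cB(K(x,\cdot)\xi)$, while by (\ref{reproducing}) and the defining relation (\ref{dualofdeltax}) of the adjoint it equals $(\delta_x)^*\cJ_\Lambda(\xi)$; hence $\cJ_\cB(K(x,\cdot)\xi)=(\delta_x)^*\cJ_\Lambda(\xi)$, and applying $\cJ_\cB^{-1}$, bijective by Lemma \ref{riesz}, gives (\ref{property3}). Reading this as $(K(x,\cdot)\xi)^*=(\delta_x)^*\cJ_\Lambda(\xi)$ and using the linearity of the bounded linear adjoint $(\delta_x)^*$ yields (\ref{property5}) whenever $\tau^*=\xi^*+\eta^*$; this is precisely why additivity must be phrased on the dual side, since $\cJ_\cB$ is non-additive. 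The homogeneity (\ref{homogeneouskxy}) follows either from (\ref{property3}) together with the conjugate homogeneity of $\cJ_\Lambda$ and $\cJ_\cB^{-1}$ and the linearity of $(\delta_x)^*$, or directly from the conjugate homogeneity of $[\cdot,\cdot]_\Lambda$ in its second slot and the injectivity of $\cJ_\cB$.

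The density statement (\ref{property6}) is where the real work lies, and it is the step I expect to be the main obstacle. Here I would invoke reflexivity: since $\cB$ is uniformly convex it is reflexive by the Milman--Pettis theorem, so the canonical embedding $\cB\to\cB^{**}$ is surjective and a subspace of $\cB^*$ is dense exactly when no nonzero $f\in\cB$ annihilates it. Thus it suffices to show that $(K(x,\cdot)\xi)^*(f)=0$ for all $x\in X$ and $\xi\in\Lambda$ forces $f=0$. By (\ref{reproducing}) we have $(K(x,\cdot)\xi)^*(f)=[f,K(x,\cdot)\xi]_\cB=[f(x),\xi]_\Lambda$, so the hypothesis reads $[f(x),\xi]_\Lambda=0$ for every $x$ and $\xi$; choosing $\xi=f(x)$ gives $\|f(x)\|_\Lambda^2=0$, hence $f(x)=0$ for all $x\in X$, and the defining property of a Banach space of functions then forces $\|f\|_\cB=0$. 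The one external ingredient that must be flagged is the Milman--Pettis reflexivity; all the rest is bookkeeping with the master identity and the algebraic properties of the duality mappings, the only genuinely delicate conceptual point being the non-additivity of $\cJ_\cB$ that dictates the dual formulation of (\ref{property5}) and (\ref{property6}).
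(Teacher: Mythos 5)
Your proposal is correct and follows essentially the same route as the paper's proof: the master identity $[K(x,x)\xi,\xi]_\Lambda=\|K(x,\cdot)\xi\|_\cB^2$ from (\ref{reproducing}), Cauchy--Schwartz in $\cB$ for (\ref{property1}) and in $\Lambda$ with the supremum characterization of $\|\cdot\|_\Lambda$ for (\ref{property2}), the identification $(K(x,\cdot)\xi)^*=(\delta_x)^*\cJ_\Lambda(\xi)$ for (\ref{property3})--(\ref{property5}), and the annihilator argument for (\ref{property6}). The only (welcome) difference is that you explicitly flag the Milman--Pettis reflexivity of $\cB$ needed to pass from ``no nonzero $f\in\cB$ annihilates the span'' to density in $\cB^*$, a point the paper's proof uses implicitly without comment.
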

\begin{proof}
By (\ref{reproducing}),
\begin{equation}\label{normofkxxi}
[K(x,x)\xi,\xi]_\Lambda=[K(x,\cdot)\xi,K(x,\cdot)\xi]_\cB=\|K(x,\cdot)\xi\|_\cB^2\ge0,
\end{equation}
which proves the first inequality in equation (\ref{property1}). For the second one, we use the Cauchy-Schwartz inequality of semi-inner products to get that
$$
\begin{array}{ll}
|[K(x,y)\xi,\eta]_\Lambda|&=|[K(x,\cdot)\xi,K(y,\cdot)\eta]_\cB|\le [K(x,\cdot)\xi,K(x,\cdot)\xi]^{1/2}_\cB[K(y,\cdot)\eta,K(y,\cdot)\eta]^{1/2}_\cB\\
&=[K(x,x)\xi,\xi]^{1/2}_\Lambda[K(y,y)\eta,\eta]^{1/2}_\Lambda.
\end{array}
$$

It follows from (\ref{property1}) that
$$
|[K(x,y)\xi,\eta]_\Lambda|\le \|K(x,x)\xi\|_\Lambda^{1/2}\|\xi\|_\Lambda^{1/2}\|K(y,y)\eta\|_\Lambda^{1/2}\|\eta\|_\Lambda^{1/2}\le \|K(x,x)\|_{\cM(\Lambda)}^{1/2}\|K(y,y)\|_{\cM(\Lambda)}^{1/2}\|\xi\|_\Lambda\|\eta\|_\Lambda.
$$
Since $\|K(x,y)\xi\|_\Lambda=\sup\{|[K(x,y)\xi,\eta]_\Lambda|:\eta\in\Lambda,\|\eta\|_\Lambda=1\}$, we have by the above equation that
$$
\|K(x,y)\xi\|_\Lambda\le \|K(x,x)\|_{\cM(\Lambda)}^{1/2}\|K(y,y)\|_{\cM(\Lambda)}^{1/2}\|\xi\|_\Lambda,
$$
which proves (\ref{property2}).

Turning to (\ref{property3}), we notice for each $f\in\cB$ that
$$
[f,\cJ_\cB^{-1}\, (\delta_x)^*\cJ_\Lambda (\xi)]_\cB=(f,(\delta_x)^*\cJ_\Lambda (\xi))_\cB=(\delta_x(f),\xi^*)_\Lambda=(f(x),\xi^*)_\Lambda=[f(x),\xi]_\Lambda,
$$
which together with (\ref{reproducing}) confirms (\ref{property3}). Since the duality mappings are conjugate homogeneous, we have by (\ref{property3}) that
$$
K(x,\cdot)(\alpha \xi)=\cJ_\cB^{-1}\, (\delta_x)^*\cJ_\Lambda (\alpha\xi)=\alpha\cJ_\cB^{-1}\, (\delta_x)^*\cJ_\Lambda (\xi)=\alpha K(x,\cdot)\xi,
$$
which implies (\ref{homogeneouskxy}).

Recall that the duality mappings $\cJ_\cB$ and $\cJ_\Lambda$ are isometric. Note also that a bounded linear operator and its adjoint have equal operator norms. Using these two facts, we obtain from equation (\ref{property3}) that
$$
\|K(x,\cdot)\xi\|_\cB\le \|(\delta_x)^*\|_{\cL(\Lambda^*,\cB^*)}\|\xi\|_{\Lambda}=\|\delta_x\|_{\cL(\cB,\Lambda)}\|\xi\|_\Lambda,
$$
which is the first inequality in (\ref{property4}). The second one follows immediately from (\ref{normofkxxi}).

Let $\xi,\eta,\tau\in\Lambda$ be such that $\tau^*=\xi^*+\eta^*$. By (\ref{property3}),
$$
(K(x,\cdot)\xi)^*+(K(x,\cdot)\eta)^*=(\delta_x)^*\xi^*+(\delta_x)^*\eta^*=(\delta_x)^*(\xi^*+\eta^*)=(\delta_x)^*\tau^*=(K(x,\cdot)\tau)^*.
$$
Equation (\ref{property5}) hence holds true.

For the last property, let us assume that there exists some $f\in\cB$ that vanishes on $\span\{(K(x,\cdot)\xi)^*:x\in X,\ \xi\in\Lambda\}$. Then
$$
[f(x),\xi]_\Lambda=[f,K(x,\cdot)\xi]_\cB=(f,(K(x,\cdot)\xi)^*)_\cB=0\mbox{ for all }x\in X,\ \xi\in\Lambda,
$$
which implies that $f(x)=0$ for all $x\in X$. As $\cB$ is a Banach space of functions, $f=0$ as a vector in the Banach space $\cB$. Therefore, (\ref{property6}) is true. The proof is complete.
\end{proof}

We observe by the above proposition that the reproducing kernel of a vector-valued RKBS enjoys many properties similar to those of the reproducing kernel of a vector-valued RKHS. However, there are many significant differences due to the nature of a semi-inner product. Firstly, although for all $x,y\in X$, $K(x,y)$ remains a homogeneous bounded operator on $\Lambda$, it is generally non-additive. This can be seen from (\ref{property3}), where $\cJ_\Lambda$ or $\cJ_\cB^{-1}$ is non-additive. Secondly, it is well-known that when $\Lambda$ is a Hilbert space, a function $K:X\times X\to \cL(\Lambda)$ is the reproducing kernel of some $\Lambda$-valued RKHS on $X$ if and only if for all finite $\xi_j\in\Lambda$ and pairwise distinct $x_j\in X$, $j=1,2,\ldots,m$,
\begin{equation}\label{positiverkhs}
\sum_{j=1}^m\sum_{k=1}^m [K(x_j,x_k)\xi_j,\xi_k]_\Lambda\ge0.
\end{equation}
Although (\ref{positiverkhs}) still holds for the reproducing kernel of a vector-valued RKBS when $m\le 2$ and the number field is $\bR$, it may cease to be true once the number of sampling points $m$ exceeds $2$. An example will be constructed in the next section. Finally, the denseness property (\ref{property6}) in the dual space $\cB^*$ does not necessarily imply that
\begin{equation}\label{densinincb}
\overline{\span}\{K(x,\cdot)\xi:x\in X,\ \xi\in \Lambda\}=\cB.
\end{equation}
A negative example will also be given in the next section after we present a construction of vector-valued RKBS through feature maps. Before that, we present another important property of a vector-valued RKBS.

\begin{prop}\label{pointconvergence}
Let $\cB$ be a $\Lambda$-valued RKBS on $X$. Suppose that $f_n\in\cB$, $n\in\bN$ converges to some $f_0\in\cB$ then $f_n(x)$ converges to $f_0(x)$ in the topology of $\Lambda$ for each $x\in X$. The convergence is uniform on the set where $\|K(x,x)\|_{\cM(\Lambda)}$ is bounded.
\end{prop}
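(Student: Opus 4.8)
The plan is to exploit the continuity of the point evaluation operators together with the kernel bound already established in Proposition \ref{basicproperties}. For the pointwise statement I would argue directly from the definition of a $\Lambda$-valued RKBS: since $\delta_x\in\cL(\cB,\Lambda)$ for each fixed $x\in X$, we have
\begin{equation*}
\|f_n(x)-f_0(x)\|_\Lambda=\|\delta_x(f_n-f_0)\|_\Lambda\le\|\delta_x\|_{\cL(\cB,\Lambda)}\|f_n-f_0\|_\cB,
\end{equation*}
and the right-hand side tends to $0$ because $f_n\to f_0$ in $\cB$. This already yields convergence of $f_n(x)$ to $f_0(x)$ in $\Lambda$ for every $x\in X$.

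For the uniformity claim the key is to replace the abstract operator norm $\|\delta_x\|_{\cL(\cB,\Lambda)}$ by the more explicit quantity $\|K(x,x)\|_{\cM(\Lambda)}^{1/2}$, whose boundedness is precisely the hypothesis under which uniform convergence is asserted. To this end I would estimate $\|f(x)\|_\Lambda$ for an arbitrary $f\in\cB$ by using the reproducing property (\ref{reproducing}). Since the norm on the uniform Banach space $\Lambda$ is computed through its compatible semi-inner product, I would write
\begin{equation*}
\|f(x)\|_\Lambda=\sup_{\xi\in\Lambda,\,\|\xi\|_\Lambda=1}|[f(x),\xi]_\Lambda|=\sup_{\xi\in\Lambda,\,\|\xi\|_\Lambda=1}|[f,K(x,\cdot)\xi]_\cB|,
\end{equation*}
then apply the Cauchy-Schwartz inequality for the semi-inner product on $\cB$ together with the second bound in (\ref{property4}) to obtain
\begin{equation*}
\|f(x)\|_\Lambda\le\|f\|_\cB\sup_{\|\xi\|_\Lambda=1}\|K(x,\cdot)\xi\|_\cB\le\|f\|_\cB\,\|K(x,x)\|_{\cM(\Lambda)}^{1/2}.
\end{equation*}

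Finally I would specialize this estimate to $f=f_n-f_0$. If $S\subseteq X$ is a set on which $\|K(x,x)\|_{\cM(\Lambda)}\le M$ for some constant $M$, then for every $x\in S$ we get $\|f_n(x)-f_0(x)\|_\Lambda\le M^{1/2}\|f_n-f_0\|_\cB$, so taking the supremum over $x\in S$ and letting $n\to\infty$ yields the asserted uniform convergence. I do not expect a genuine obstacle here; the only step requiring care is the passage from $\|\delta_x\|_{\cL(\cB,\Lambda)}$ to $\|K(x,x)\|_{\cM(\Lambda)}^{1/2}$, which is exactly what the semi-inner product representation of $\|f(x)\|_\Lambda$ combined with (\ref{property4}) delivers.
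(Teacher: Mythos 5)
Your proposal is correct and follows essentially the same route as the paper: the paper's proof consists precisely of writing $\|f_n(x)-f_0(x)\|_\Lambda$ as $\sup_{\|\xi\|_\Lambda=1}|[f_n-f_0,K(x,\cdot)\xi]_\cB|$ via the reproducing property, then applying the Cauchy--Schwartz inequality and the second bound in (\ref{property4}) to obtain the factor $\|K(x,x)\|_{\cM(\Lambda)}^{1/2}$, which yields both the pointwise and the uniform statements at once. Your only deviation is cosmetic: you first dispose of pointwise convergence by the bound $\|\delta_x\|_{\cL(\cB,\Lambda)}$ from the definition of an RKBS, whereas the paper derives it from the same kernel estimate used for uniformity; both are valid and the key step is identical.
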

\begin{proof}
Suppose that $\|f_n-f\|_\cB$ converges $0$ as $n$ tends to infinity. We get by (\ref{property4}) that
$$
\begin{array}{ll}
\|f_n(x)-f(x)\|_\Lambda&\displaystyle{=\sup_{\xi\in\Lambda,\|\xi\|_\Lambda=1}|[f_n(x)-f(x),\xi]_\Lambda|}\\
&\displaystyle{=\sup_{\xi\in\Lambda,\|\xi\|_\Lambda=1}|[f_n-f,K(x,\cdot)\xi]_\cB|\le \sup_{\xi\in\Lambda,\|\xi\|_\Lambda=1}\|f_n-f\|_\cB \|K(x,\cdot)\xi\|_{\cB}}\\
&\le\|f_n-f\|_\cB \|K(x,x)\|_{\cM(\Lambda)}^{1/2}.
\end{array}
$$
Therefore, $f_n(x)$ converges pointwise to $f(x)$ on $X$ and the convergence is uniform on the set where $\|K(x,x)\|_{\cM(\Lambda)}$ is bounded.
\end{proof}

\section{Feature Map Representations}
\setcounter{equation}{0}

Feature map representations form the most important way of expressing reproducing kernels. To introduce feature maps for the reproducing kernel of a vector-valued RKBS, we need the notion of the generalized adjoint \cite{Koehler} of a bounded linear operator between Banach spaces. Let $V_1,V_2$ be two uniform Banach spaces with the compatible semi-inner products $[\cdot,\cdot]_{V_1}$ and $[\cdot,\cdot]_{V_2}$, respectively. The {\it generalized adjoint} $T^\dag$ of a $T\in\cL(V_1,V_2)$ is an operator in $\cM(V_2,V_1)$ defined by
$$
[Tu,v]_{V_2}=[u,T^\dag v]_{V_1},\ \ u\in V_1,\ v\in V_2.
$$
It can be identified that
$$
T^\dag=\cJ_{V_1}^{-1} T^* \cJ_{V_2}.
$$
Thus, $T^\dag$ is indeed bounded as
$$
\|T^\dag\|_{\cM(V_2,V_1)}=\|T^*\|_{\cL(V_2^*,V_1^*)}=\|T\|_{\cL(V_1,V_2)}.
$$
We are in a position to present a characterization of the reproducing kernel of a vector-valued RKBS.

\begin{theorem}\label{featuremap}
A function $K:X\times X\to\cM(\Lambda)$ is the reproducing kernel of some $\Lambda$-valued RKBS on $X$ if and only if there exists a uniform Banach space $\cW$ and a mapping $\Phi:X\to \cL(\cW,\Lambda)$ such that
\begin{equation}\label{featuremapindeed}
K(x,y)=\Phi(y)\Phi^\dag(x),\ \ x,y\in X,
\end{equation}
and
\begin{equation}\label{densenessfeature}
\overline{\span}\{(\Phi^\dag(x)\xi)^*:x\in X,\ \xi\in\Lambda\}=\cW^*.
\end{equation}
Here $\Phi^\dag$ is the function from $X$ to $\cM(\Lambda,\cW)$ defined by $\Phi^\dag(x):=(\Phi(x))^\dag$, $x\in X$.
\end{theorem}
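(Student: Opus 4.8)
The plan is to prove the two implications separately, with essentially all of the content residing in the sufficiency direction. For necessity, suppose $K$ is the reproducing kernel of a $\Lambda$-valued RKBS $\cB$. I would simply take the feature space to be $\cB$ itself and the feature map to be point evaluation, $\Phi(x):=\delta_x\in\cL(\cB,\Lambda)$. Then $\Phi^\dag(x)=(\delta_x)^\dag=\cJ_\cB^{-1}(\delta_x)^*\cJ_\Lambda$, which by (\ref{property3}) is exactly the operator $\xi\mapsto K(x,\cdot)\xi$. The factorization (\ref{featuremapindeed}) then follows from $\Phi(y)\Phi^\dag(x)\xi=\delta_y\bigl(K(x,\cdot)\xi\bigr)=K(x,y)\xi$, while (\ref{densenessfeature}) is literally (\ref{property6}), since $(\Phi^\dag(x)\xi)^*=(K(x,\cdot)\xi)^*$ and $\cW^*=\cB^*$.

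For sufficiency I would build the space explicitly from the given data. For each $u\in\cW$ define the $\Lambda$-valued function $f_u$ on $X$ by $f_u(x):=\Phi(x)u$, and set $\cB:=\{f_u:u\in\cW\}$, a linear space of functions under pointwise operations because each $\Phi(x)$ is linear. The decisive first step is to show that $u\mapsto f_u$ is injective, which is where hypothesis (\ref{densenessfeature}) enters: if $f_u\equiv0$ then $\Phi(x)u=0$ for every $x$, so by the generalized-adjoint relation $(u,(\Phi^\dag(x)\xi)^*)_\cW=[\Phi(x)u,\xi]_\Lambda=0$ for all $x\in X$ and $\xi\in\Lambda$, and the denseness of these functionals in $\cW^*$ forces $u=0$. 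Consequently $u\mapsto f_u$ is a linear bijection, and norming $\cB$ by $\|f_u\|_\cB:=\|u\|_\cW$ makes it an isometric isomorphism of $\cW$ onto $\cB$.

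It then remains to check that $\cB$ so normed is a $\Lambda$-valued RKBS with reproducing kernel $K$. Uniform convexity and uniform Fr\'{e}chet differentiability pass from $\cW$ to $\cB$ through the isometry, and $\|f_u\|_\cB=0$ holds iff $u=0$ iff $f_u\equiv0$, so $\cB$ is a uniform Banach space of functions; continuity of $\delta_x$ follows from $\|f_u(x)\|_\Lambda=\|\Phi(x)u\|_\Lambda\le\|\Phi(x)\|_{\cL(\cW,\Lambda)}\|f_u\|_\cB$. Writing $u_{x,\xi}:=\Phi^\dag(x)\xi$, the factorization (\ref{featuremapindeed}) gives $K(x,\cdot)\xi=\Phi(\cdot)u_{x,\xi}=f_{u_{x,\xi}}\in\cB$, and the reproducing identity reduces to the chain $[f_u(x),\xi]_\Lambda=[\Phi(x)u,\xi]_\Lambda=[u,u_{x,\xi}]_\cW=[f_u,f_{u_{x,\xi}}]_\cB$, the last equality holding because the isometry transports the semi-inner product. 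That $K$ is then \emph{the} reproducing kernel is guaranteed by the uniqueness already established in Theorem \ref{reproducingkernelthm}.

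I expect the main obstacle to be the sufficiency direction, and specifically the role of the denseness condition (\ref{densenessfeature}). It is exactly this hypothesis that makes $u\mapsto f_u$ injective and hence the norm $\|f_u\|_\cB:=\|u\|_\cW$ well-defined; without it, distinct elements of $\cW$ could induce the same function and the whole construction would collapse. A secondary point demanding care is the claim that the isometric isomorphism preserves the semi-inner product: this rests on the uniqueness of the compatible semi-inner product on a uniform Banach space (Lemma \ref{riesz}), since the semi-inner product transported from $\cB$ back to $\cW$ is again compatible and must therefore coincide with $[\cdot,\cdot]_\cW$, which is precisely what turns the abstract generalized-adjoint relation into the concrete reproducing identity.
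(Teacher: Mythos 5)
Your proposal is correct and takes essentially the same route as the paper's own proof: necessity via $\cW:=\cB$ and $\Phi(x):=\delta_x$ with $\Phi^\dag(x)\xi=K(x,\cdot)\xi$, and sufficiency via the isometrically normed space $\{\Phi(\cdot)u:u\in\cW\}$, where the denseness condition (\ref{densenessfeature}) yields injectivity of $u\mapsto f_u$ exactly as in the paper. Your explicit appeal to Lemma \ref{riesz} to justify that the transported semi-inner product $[f_u,f_v]_\cB:=[u,v]_\cW$ is \emph{the} compatible one on $\cB$ only makes precise a step the paper performs without comment.
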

\begin{proof}
Suppose that $K$ is the reproducing kernel of some $\Lambda$-valued RKBS $\cB$ on $X$. Set $\cW:=\cB$ and define $\Phi:X\to \cL(\cW,\Lambda)$ by
$$
(\Phi(x))(f):=f(x),\ \ f\in \cB,\ \ x\in X.
$$
To identify $\Phi^\dag$, we observe by the reproducing property (\ref{reproducing}) for all $\xi\in\Lambda$ and $f\in\cB$ that
$$
[f,\Phi^\dag(x)\xi]_\cB=[(\Phi(x))f,\xi]_\Lambda=[f(x),\xi]_\Lambda=[f,K(x,\cdot)\xi]_\cB,\ \ x\in X,\ \ \xi\in\Lambda,
$$
which implies that $\Phi^\dag(x)\xi=K(x,\cdot)\xi$ for all $x\in X$ and $\xi\in\Lambda$. Requirement (\ref{densenessfeature}) is fulfilled by (\ref{property6}). By the forms of $\Phi$ and $\Phi^\dag$, we obtain that
$$
\Phi(y)\Phi^\dag(x)\xi=\Phi(y)(K(x,\cdot)\xi)=K(x,y)\xi,
$$
which proves (\ref{featuremapindeed}).

On the other hand, suppose that $K$ is of the form (\ref{featuremapindeed}) in terms of some mapping $\Phi$ satisfying the denseness condition (\ref{densenessfeature}). We shall construct the RKBS that takes $K$ as its reproducing kernel. For this purpose, we let $\cB$ be composed of functions from $X$ to $\Lambda$ of the following form
$$
f_u(x):=\Phi(x)u,\ x\in X\mbox{ for some }u\in\cW.
$$
Since each $\Phi(x)$ is a linear operator, $\cB$ is a linear vector space. We impose a norm on $\cB$ by setting
$$
\|f_u\|_\cB:=\|u\|_\cW,\ \ u\in\cW.
$$
To verify that this is a well-defined norm, it suffices to show that the representer $u$ of a function $f_u\in\cB$ is unique. Assume that $f_u=0$. Then for all $x\in X$ and $\xi\in\Lambda$,
$$
(u,(\Phi^\dag(x)\xi)^*)_\cW=[u,\Phi^\dag(x)\xi]_\cW=[\Phi(x)u,\xi]_\Lambda=[0,\xi]_\Lambda=0,
$$
which combined with (\ref{densenessfeature}) implies that $u=0$. The arguments also show that $\cB$ is a Banach space of functions. Moreover, it is a uniform Banach space as it is isometrically isomorphic to $\cW$. Clearly, we have for each $x\in X$ and $u\in\cW$ that
$$
\|f_u(x)\|_\Lambda=\|\Phi(x)u\|_\Lambda\le \|\Phi(x)\|_{\cL(\cW,\Lambda)}\|u\|_\cW=\|\Phi(x)\|_{\cL(\cW,\Lambda)}\|f_u\|_\cB,
$$
which shows that point evaluations are bounded on $\cB$. We conclude that $\cB$ is a $\Lambda$-valued RKBS on $X$. It remains to prove that $K$ is the reproducing kernel of $\cB$. To this end, we identify the unique compatible semi-inner product on $\cB$ as
$$
[f_u,f_v]_\cB:=[u,v]_\cW,\ \ u,v\in\cW,
$$
and observe for all $u\in\cW$ and $x\in X$ that
$$
[f_u,K(x,\cdot)\xi]_\cB=[f_u,\Phi(\cdot)\Phi^\dag(x)\xi]_\cB=[u,\Phi^\dag(x)\xi]_\cW=[\Phi(x)u,\xi]_\Lambda=[f_u(x),\xi]_\Lambda,
$$
which is what we want. The proof is complete.
\end{proof}

We call the Banach space $\cW$ and the mapping $\Phi$ in Theorem \ref{featuremap} a pair of {\it feature space} and {\it feature map} for $K$, respectively. The proof of Theorem \ref{featuremap} contains a construction of vector-valued RKBS by feature maps, which we pull out separately as a corollary below.

\begin{coro}\label{constructionbyfeaturemap}
Let $\cW$ be a uniform Banach space and $\Phi:X\to\cL(\cW,\Lambda)$ be a feature map of $K$ that satisfies (\ref{featuremapindeed}) and (\ref{densenessfeature}). Then the linear vector space
$$
\cB:=\{\Phi(\cdot)u:\ u\in\cW\}
$$
endowed with the norm
$$
\|\Phi(\cdot)u\|_\cB:=\|u\|_\cW,\ \ u\in\cW
$$
and compatible semi-inner product
$$
[\Phi(\cdot)u,\Phi(\cdot)v]_\cB:=[u,v]_\cW,\ \ u,v\in\cW
$$
is a $\Lambda$-valued RKBS on $X$ with the reproducing kernel $K$ given by (\ref{featuremapindeed}).
\end{coro}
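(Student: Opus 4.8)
The plan is to observe that this statement is precisely the construction carried out in the second (``if'') half of the proof of Theorem \ref{featuremap}; the task reduces to checking that each ingredient used there remains valid under the present hypotheses and to collecting the resulting conclusions. Throughout I write $f_u:=\Phi(\cdot)u$ for $u\in\cW$. Since every claim has already been verified in that argument, the proof will be short and will consist mainly of recalling the relevant computations.

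First I would confirm that the norm $\|f_u\|_\cB:=\|u\|_\cW$ is well defined, which amounts to showing that the representer $u$ of a function $f_u\in\cB$ is unique. Supposing $f_u$ is the zero function, one has for every $x\in X$ and $\xi\in\Lambda$ that
$$
(u,(\Phi^\dag(x)\xi)^*)_\cW=[u,\Phi^\dag(x)\xi]_\cW=[\Phi(x)u,\xi]_\Lambda=0,
$$
so $u$ annihilates $\span\{(\Phi^\dag(x)\xi)^*:x\in X,\ \xi\in\Lambda\}$; by the denseness hypothesis (\ref{densenessfeature}) this forces $u=0$. This is the one place where (\ref{densenessfeature}) is genuinely needed, and it is the main (if modest) obstacle in the argument.

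With uniqueness of the representer established, the remaining points are routine. The assignment $u\mapsto f_u$ is a linear bijection from $\cW$ onto $\cB$ that is isometric by the very definition of $\|\cdot\|_\cB$; hence $\cB$ is a uniform Banach space, inheriting uniform convexity and uniform Fr\'{e}chet differentiability from $\cW$, and the prescribed form $[f_u,f_v]_\cB:=[u,v]_\cW$ is its unique compatible semi-inner product. Moreover $\|f_u\|_\cB=0$ holds precisely when $u=0$, i.e.\ when $f_u$ vanishes identically on $X$, so $\cB$ is a Banach space of functions. The estimate $\|f_u(x)\|_\Lambda=\|\Phi(x)u\|_\Lambda\le\|\Phi(x)\|_{\cL(\cW,\Lambda)}\|u\|_\cW=\|\Phi(x)\|_{\cL(\cW,\Lambda)}\|f_u\|_\cB$ shows each $\delta_x$ belongs to $\cL(\cB,\Lambda)$, so $\cB$ is indeed a $\Lambda$-valued RKBS on $X$.

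Finally I would verify that $K$ is its reproducing kernel. For $u\in\cW$, $x\in X$, and $\xi\in\Lambda$, using $K(x,\cdot)\xi=\Phi(\cdot)\Phi^\dag(x)\xi$ from (\ref{featuremapindeed}) together with the definitions of the semi-inner product on $\cB$ and of the generalized adjoint,
$$
[f_u,K(x,\cdot)\xi]_\cB=[f_u,\Phi(\cdot)\Phi^\dag(x)\xi]_\cB=[u,\Phi^\dag(x)\xi]_\cW=[\Phi(x)u,\xi]_\Lambda=[f_u(x),\xi]_\Lambda,
$$
which is the reproducing property (\ref{reproducing}). By the uniqueness asserted in Theorem \ref{reproducingkernelthm}, $K$ is the reproducing kernel of $\cB$, completing the plan.
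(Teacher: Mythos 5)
Your proposal is correct and follows essentially the same route as the paper: the paper itself states that this corollary is extracted verbatim from the second half of the proof of Theorem \ref{featuremap}, and your argument reproduces exactly those steps (uniqueness of the representer via the denseness condition (\ref{densenessfeature}), uniformity of $\cB$ through the isometric isomorphism $u\mapsto f_u$, boundedness of point evaluations, and the verification of the reproducing property). Nothing further is needed.
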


As an interesting application of Corollary \ref{constructionbyfeaturemap}, we shall show that a vector-valued RKBS is always isometrically isomorphic to a scalar-valued RKBS on a different input space.

\begin{coro}\label{isometrictoscalar}
If $\cB$ is a $\Lambda$-valued RKBS on $X$ then the following linear vector space $\tilde{\cB}$ of complex-valued functions $\tilde{f}$ on $\tilde{X}:=X\times\Lambda$ of the form
$$
\tilde{f}(x,\xi):=[f(x),\xi]_\Lambda,\ \ x\in X,\ \xi\in\Lambda,\ f\in\cB
$$
is an RKBS on $\tilde{X}$ with the norm
$$
\|\tilde{f}\|_{\tilde{\cB}}:=\|f\|_\cB,\ \ f\in\cB
$$
and the compatible semi-inner product
$$
[\tilde{f},\tilde{g}]_{\tilde{\cB}}:=[f,g]_\cB,\ \ f,g\in\cB.
$$
The reproducing kernel $\tilde{K}$ of $\tilde{\cB}$ is
$$
\tilde{K}((x,\xi),(y,\eta)):=[K(x,y)\xi,\eta]_\Lambda,\ \ x,y\in X,\ \xi,\eta\in\Lambda.
$$
\end{coro}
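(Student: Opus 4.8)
The plan is to realize $\tilde{\cB}$ directly through the feature-map construction of Corollary \ref{constructionbyfeaturemap}, taking the feature space to be $\cB$ itself. First I would set $\cW:=\cB$ (which is uniform by hypothesis) and note that the output space of the scalar-valued RKBS to be built is $\bC$, which is uniform as a one-dimensional Hilbert space. I then define a feature map $\tilde{\Phi}:\tilde{X}\to\cL(\cB,\bC)=\cB^*$ on the enlarged input set $\tilde{X}=X\times\Lambda$ by
$$
(\tilde{\Phi}(x,\xi))(f):=[f(x),\xi]_\Lambda,\ \ f\in\cB.
$$
The estimate (\ref{bounedlinearoncb}) already shows that each $\tilde{\Phi}(x,\xi)$ is a bounded linear functional on $\cB$, so $\tilde{\Phi}$ indeed maps into $\cL(\cB,\bC)$.

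Next I would compute the generalized adjoint $\tilde{\Phi}^\dag(x,\xi)\in\cM(\bC,\cB)$. Starting from the defining relation $[\tilde{\Phi}(x,\xi)f,\zeta]_\bC=[f,\tilde{\Phi}^\dag(x,\xi)\zeta]_\cB$ and combining the reproducing identity (\ref{reproducing}) with the conjugate homogeneity of the semi-inner product in its second slot, one reads off $\tilde{\Phi}^\dag(x,\xi)\zeta=\zeta\,K(x,\cdot)\xi$. Evaluating the feature formula (\ref{featuremapindeed}) for this scalar output on $1\in\bC$ then gives $\tilde{\Phi}(y,\eta)(K(x,\cdot)\xi)=[(K(x,\cdot)\xi)(y),\eta]_\Lambda=[K(x,y)\xi,\eta]_\Lambda$, which is exactly the asserted kernel $\tilde{K}((x,\xi),(y,\eta))$.

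It then remains to verify the denseness hypothesis (\ref{densenessfeature}). Since $(\tilde{\Phi}^\dag(x,\xi)\zeta)^*=(\zeta K(x,\cdot)\xi)^*$ and the duality map $\cJ_\cB$ is conjugate homogeneous, each of these functionals is a scalar multiple of $(K(x,\cdot)\xi)^*$, so their closed span over all $(x,\xi)\in\tilde{X}$ and $\zeta\in\bC$ coincides with $\overline{\span}\{(K(x,\cdot)\xi)^*:x\in X,\ \xi\in\Lambda\}$, which is all of $\cB^*=\cW^*$ by (\ref{property6}). With (\ref{featuremapindeed}) and (\ref{densenessfeature}) established, Corollary \ref{constructionbyfeaturemap} applies and produces a scalar-valued RKBS $\{\tilde{\Phi}(\cdot)u:u\in\cB\}$ on $\tilde{X}$, normed by $\|u\|_\cB$, with semi-inner product $[u,v]_\cB$ and reproducing kernel $\tilde{K}$. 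I would finish by identifying this with the stated space: because $(\tilde{\Phi}(\cdot)u)(x,\xi)=[u(x),\xi]_\Lambda=\tilde{u}(x,\xi)$, the element $\tilde{\Phi}(\cdot)u$ is precisely $\tilde{u}$, so the constructed space is $\tilde{\cB}$ and its transported norm and semi-inner product are exactly those in the statement.

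The one genuinely delicate point is the well-definedness of $\|\tilde{f}\|_{\tilde{\cB}}:=\|f\|_\cB$, that is, the injectivity of $f\mapsto\tilde{f}$ so that $f$ is determined by $\tilde{f}$; in the feature-map route this is not checked by hand but is subsumed in the uniqueness-of-representer argument that the denseness condition (\ref{densenessfeature}) supplies inside Corollary \ref{constructionbyfeaturemap}. Directly, if $\tilde{f}\equiv 0$ then $[f(x),\xi]_\Lambda=0$ for every $\xi\in\Lambda$, and choosing $\xi=f(x)$ forces $f(x)=0$ for all $x$, whence $f=0$ since $\cB$ is a Banach space of functions. I would expect the remaining verifications, namely the boundedness of $\tilde{\Phi}$, the adjoint computation, and the transfer of uniform convexity and uniform Fr\'{e}chet differentiability through the isometric isomorphism $f\mapsto\tilde{f}$, to be routine; the only spot demanding real care is the conjugate homogeneity and non-additivity of $\cJ_\cB$, which must be tracked so that the denseness statement (\ref{property6}) transfers correctly to (\ref{densenessfeature}).
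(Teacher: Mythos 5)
Your proposal is correct and follows essentially the same route as the paper: the paper's one-line proof invokes Corollary \ref{constructionbyfeaturemap} with $\cW:=\cB$, output space $\bC$, and feature map $\Phi(x,\xi):=(K(x,\cdot)\xi)^*$, which is exactly your $\tilde{\Phi}(x,\xi)(f)=[f(x),\xi]_\Lambda$ since $(f,(K(x,\cdot)\xi)^*)_\cB=[f(x),\xi]_\Lambda$ by the reproducing property, with denseness supplied by (\ref{property6}) in both cases. You merely spell out the adjoint computation, the kernel identification, and the injectivity of $f\mapsto\tilde{f}$, all of which the paper leaves implicit, and these verifications are carried out correctly.
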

\begin{proof}
It suffices to point out that $\tilde{\cB}$ is constructed by Corollary \ref{constructionbyfeaturemap} via the choices
$$
\Lambda:=\bC,\ \cW:=\cB,\ \Phi(x,\xi):=(K(x,\cdot)\xi)^*,\ \ (x,\xi)\in\tilde{X}.
$$
The feature map satisfies the denseness condition by (\ref{property6}).
\end{proof}

We shall next construct by Corollary \ref{constructionbyfeaturemap} simple vector-valued RKBS to show that the reproducing kernel of a general vector-valued RKBS might not satisfy (\ref{positiverkhs}) or (\ref{densinincb}). Let $p,q,r,s\in(1,+\infty)$ satisfy that
\begin{equation}\label{conjugatenumber}
\frac1p+\frac1q=\frac1r+\frac1s=1.
\end{equation}
Here, for the sake of convenience in enumerating elements from a finite set, we set $\bN_l:=\{1,2,\ldots,l\}$ for $l\in\bN$. For each $\gamma\in (1,+\infty)$ and $l\in\bN$, $\ell^l_\gamma$ denotes the Banach space of all vectors $u=(u_j:j\in\bN_l)\in\bC^l$ with the norm
$$
\|u\|_{\ell^l_\gamma}:=\biggl(\sum_{j=1}^l|u_j|^\gamma\biggr)^{1/\gamma}<+\infty.
$$
The space $\ell^l_\gamma$ is a uniform Banach space with the compatible semi-inner product
$$
[u,v]_{\ell^l_\gamma}:=\sum_{j=1}^l\frac{u_j\overline{v_j}|v_j|^{\gamma-2}}{\|v\|_{\ell^l_\gamma}^{\gamma-2}},\ \ u,v\in \ell^l_\gamma.
$$
The dual element $u^*$ of $u\in\ell^l_\gamma$ is hence given by
\begin{equation}\label{dualinbcl}
u^*:=\left(\frac{\overline{v_j}|v_j|^{\gamma-2}}{\|v\|_{\ell^l_\gamma}^{\gamma-2}}:j\in\bN_l\right),\ \ u\in\ell^l_\gamma.
\end{equation}

\noindent {\bf Non-completeness of the linear span of the reproducing kernel in $\cB$.}  We give a counterexample of (\ref{densinincb}) first. Let $m,n\in\bN$. We choose the output space $\Lambda$ and feature space $\cW$ as $\ell^n_p$ and $\ell^m_r$, respectively. Thus, we have that $\Lambda^*=\ell^n_q$ and $\cW^*=\ell^m_s$. The input space will be chosen as a set of $m$ discrete points $X:=\{x_j:j\in\bN_m\}$. A feature map $\Phi:X\to\cL(\cW,\Lambda)$ should satisfy the denseness condition (\ref{densenessfeature}). We note by the definition of the generalized adjoint that this condition is equivalent to
\begin{equation}\label{densenessfeature2}
\overline{\span}\{\Phi^*(x)\xi^*:x\in X,\ \xi\in\Lambda\}=\cW^*,
\end{equation}
where $\Phi^*(x):=(\Phi(x))^*$ for all $x\in X$.

Let us take a close look at equation (\ref{densinincb}). By Corollary \ref{constructionbyfeaturemap}, a general function in $\cB$ is of the form $f_u:=\Phi(\cdot)u$ for some $u\in\cW$. Equation (\ref{densinincb}) does not hold true if and only if there exists a nontrivial $u\in\cW$ such that
$$
[K(x,\cdot)\xi,f_u]_\cB=[\Phi(\cdot)\Phi^\dag(x)\xi,\Phi(\cdot)u]_\cB=[\Phi^\dag(x)\xi,u]_\cW=0,
$$
which in turn is equivalent to that $\span\{\Phi^\dag(x)\xi:x\in X,\ \xi\in\Lambda\}$ is not dense in $\cW$. We conclude that to construct a $\Lambda$-valued RKBS for which (\ref{densinincb}) is not true, it suffices to find a feature map $\Phi:X\to\cL(\cW,\Lambda)$ that satisfies (\ref{densenessfeature2}) but
\begin{equation}\label{notdenseincw}
\overline{\span}\{\Phi^\dag(x)\xi:x\in X,\ \xi\in\Lambda\}\subsetneqq\cW.
\end{equation}

To this end, we find a sequence of vectors $w_j\in\bC^m$ and set
\begin{equation}\label{defphi*}
\Phi^*(x_j)\xi^*:=(\xi^*)_1 w_j,\ \ j\in\bN_m,
\end{equation}
where $(\xi^*)_1$ is the first component of the vector $\xi^*\in\bC^n$. Since for each $j\in\bN_m$, $\Phi^*(x_j)$ is a linear operator from $\Lambda^*$ to $\cW^*$ and both the spaces are finite-dimensional, $\Phi^*(x_j)$ is bounded. We reformulate (\ref{densenessfeature2}) and (\ref{notdenseincw}) to get that they are respectively equivalent to
\begin{equation}\label{discretedense}
\span\{w_j:j\in\bN_m\}=\bC^m
\end{equation}
and
\begin{equation}\label{discretedense}
\span\{\cJ_\cW^{-1}w_j:j\in\bN_m\}\subsetneqq\bC^m.
\end{equation}
Here for a vector $u=(u_j:j\in\bN_m)\in\bC^m$, we get by (\ref{dualinbcl}) that
$$
\cJ_\cW^{-1}u=\left(\frac{\overline{u_j}|u_j|^{s-2}}{\|u\|_{\ell^m_s}^{s-2}}:j\in\bN_m\right).
$$
Therefore, the task reduces to the searching of an $m\times m$ nonsingular matrix $A$ that becomes singular when we apply the function $t\to \overline{t}|t|^{s-2}$ to each of its components. We find two such matrices as shown below
$$
m=4,\ s=4,\ A_1:=\left[
\begin{array}{cccc}
0&8&2&4\\
5&0&5&1\\
5&4&6&9\\
0&9&4&8
\end{array}
\right],\mbox{ and }m=4,\ s=5,\ A_2:=\left[
\begin{array}{cccc}
9&9&9&9\\
8&6&0&2\\
6&9&2&1\\
7&4&9&9
\end{array}
\right].
$$

\noindent {\bf Non-positive-definiteness of the reproducing kernel of $\cB$.} We shall give an example to show that (\ref{positiverkhs}) might not hold true for the reproducing kernel of a vector-valued RKBS when the number $m$ of sampling points exceeds $2$. In fact, we let $m=3$ and $\cB$ be constructed as in the above example with $\{w_j:j\in\bN_3\}$ to be appropriately chosen in the definition (\ref{defphi*}) of $\Phi^*$. Our purpose is to find $w_j\in\bC^3$ and $\xi_j\in\Lambda$, $j\in\bN_3$ such that
\begin{equation}\label{negativedefinite}
\sum_{j=1}^3\sum_{k=1}^3[K(x_j,x_k)\xi_j,\xi_k]_\cB<0.
\end{equation}
We first note for all $j,k\in\bN_3$ that
$$
\begin{array}{ll}
[K(x_j,x_k)\xi_j,\xi_k]_\Lambda&=[\Phi(x_k)\Phi^\dag(x_j)\xi_j,\xi_k]_\Lambda=[\Phi^\dag(x_j)\xi_j,\Phi^\dag(x_k)\xi_k]_\Lambda\\
&=
[(\Phi^\dag(x_k)\xi_k)^*,(\Phi^\dag(x_j)\xi_j)^*]_{\Lambda^*}
=[\Phi^*(x_k)(\xi_k)^*,\Phi^*(x_j)(\xi_j)^*]_{\Lambda^*}.
\end{array}
$$
We shall choose $\xi_j\in\Lambda$ so that $((\xi_j)^*)_1=1$ for each $j\in\bN_3$. With the choice, we obtain by (\ref{defphi*}) and the above equation that
$$
\sum_{j=1}^3\sum_{k=1}^3[K(x_j,x_k)\xi_j,\xi_k]_\cB=
\sum_{j=1}^3\sum_{k=1}^3[w_k,w_j]_{\ell^3_s}.
$$
The conclusion is that for (\ref{negativedefinite}) to hold, it suffices to find $w_j\in\bC^3$, $j\in\bN_3$ that form a basis for $\bC^3$ but
$$
\sum_{j=1}^3\sum_{k=1}^3[w_k,w_j]_{\ell^3_s}<0.
$$
Two examples are shown below
$$
s=4,\ [w_1,w_2,w_3]=\left[\begin{array}{rrr}
4&    -2&    -3\\
3   & -5&     4\\
1  &  -1&     1
\end{array}\right],\mbox{ and }
s=5,\ [w_1,w_2,w_3]=\left[\begin{array}{rrr}
3 &    2 &   -3\\
     2   & -3    & 3\\
    -5    & 0 &    4
\end{array}\right].
$$

\section{Examples of Vector-valued RKBS}
\setcounter{equation}{0}

We present several examples of vector-valued RKBS in this section. The first one of them is applicable to learning a sensing matrix.

\subsection{The space of sensing matrices} Spaces involved in this example are all over the field $\bR$ of real numbers. The input space and output space are chosen by $X:=\bR^d$ and $\Lambda:=\bR^n$. The vector-valued RKBS $\cB$ consists of all the $n\times d$ real matrices. Each $A\in\cB$ is considered to be a function from $\bR^d$ to $\bR^n$ with the point evaluation
$$
A(x):=Ax,\ \ x\in\bR^d.
$$
To find a norm that makes $\cB$ a uniform Banach space, we first point out that a finite-dimensional Banach space $V$ is uniform if and only if its norm is strictly convex. For a proof of this simple fact, see, for example, \cite{ZZacha}. Recall that $\|\cdot\|_V$ is said to be {\it strictly convex} if for all $u,v\in V\setminus\{0\}$, $\|u+v\|_V=\|u\|_V+\|v\|_V$ always implies that $u=\alpha v$ for some $\alpha>0$. Strictly convex norms on $\cB$ include
\begin{itemize}
\item column-wise norms:
\begin{equation}\label{matrixnorm1}
\|A\|_\cB:=G(\|a_1\|_1,\|a_2\|_2,\cdots,\|a_d\|_d),\ \ A\in\cB,
\end{equation}
where for each $j\in\bN_d$, $a_j$ is the $j$-th column of $A$ and $\|\cdot\|_j$ is a strictly convex norm on $\bR^n$, and $G$ is a strictly convex function from $\bR_+^d$ to $\bR_+:=[0,\infty)$ that is strictly increasing with respect to each of its variables and is homogeneous in the sense that
$$
G(\alpha x)=\alpha G(x)\mbox{ for all }x\in\bR_+^d\mbox{ and }\alpha\in\bR_+.
$$
It is straightforward to verify that under the above conditions, (\ref{matrixnorm1}) is indeed a strictly convex norm on $\cB$. An explicit instance is
\begin{equation}\label{matrixnorm2}
\|A\|_\cB:=\|(\|a_j\|_{\ell^n_p}:j\in\bN_d)\|_{\ell^d_r},\ \ A\in\cB,
\end{equation}
where $p,r\in(1,+\infty)$. One can easily transform a column-wise norm $\|\cdot\|_\cB$ into a row-wise norm by equipping $A\in\cB$ with $\|A^T\|_\cB$, where $A^T$ is the transpose of $A$.

\item the $p$-th Schatten norm (see, Section 3.5 of \cite{HornJohnson}):
$$
\|A\|_\cB:=\left(\sum_{j=1}^{\min(n,d)}(\sigma_j(A))^p\right)^{1/p},\ \ A\in\cB,\ \ p\in(1,+\infty),
$$
where $\sigma_j(A)$ is the $j$-th singular value of $A$. The $p$-th Schatten norm belongs to the class of matrix norms that are invariant under multiplication by unitary matrices.
\end{itemize}

We shall look at the reproducing kernel of $\cB$ when it is endowed with the norm (\ref{matrixnorm2}) and the output space $\bR^n$ is equipped with the norm of $\ell^n_\gamma$ for some $\gamma\in(1,+\infty)$. Let $q,s$ be the conjugate number of $p$ and $r$, respectively. In other words, they satisfy (\ref{conjugatenumber}). We proceed by (\ref{reproducing}) that
$$
(Ax,\xi^*)_{\ell^n_\gamma}=[A,K(x,\cdot)\xi]_\cB=(A,(K(x,\cdot)\xi)^*)_\cB,\ \ A\in\cB,\ x\in\bR^d,\ \xi\in\bR^n,
$$
which implies that
\begin{equation}\label{dualofkxxi}
(K(x,\cdot)\xi)^*=\xi^*x^T,\ \ x\in \bR^d,\ \xi\in\bR^n.
\end{equation}
The dual element of $A\in\cB$ is given by
$$
A^*=\frac1{\|A\|_\cB^{r-2}}\left[a_j^*\|a_j\|_{\ell^n_p}^{r-2}:j\in\bN_d\right],
$$
where $a_j^*$ is the dual vector of $a_j$ in $\ell^n_p$. The reproducing kernel of $\cB$ can be derived from the above two equation. Its explicit form is too complicated to be presented. We shall see from the study of regularized learning schemes in vector-valued RKBS that the identification (\ref{dualofkxxi}) of its dual is usually more important.

\subsection{Tensor products of scalar-valued RKBS}

Let $n\in\bN$ and $\cB_j$, $j\in\bN_n$ be scalar-valued RKBS on an input space $X$. We let $\cB$ be the tensor product of $\cB_j$, $j\in\bN_n$. Thus, it consists of $\bC^n$-valued functions of the form $f=(f_j\in\cB_j:j\in\bN_n)$. To define a norm on $\cB$, we choose functions $\cN,\cN^*$ from $\bR_+^n$ to $\bR_+$ that are strictly convex, strictly increasing with respect to each of the variables, homogeneous, and satisfy that $x\to\cN^*(|x|)$ is the dual norm of $x\to\cN(|x|)$ on $\bR^n$. Here, $|x|:=(|x_j|:j\in\bN_n)$ for each $x\in\bR^n$. An example is
$$
\cN(x):=\|x\|_{\ell^n_p},\ \ \cN^*(x):=\|x\|_{\ell^n_q},\ \ x\in\bR_+^n,
$$
where $p,q$ are a pair of conjugate numbers in $(1,+\infty)$. With such two gauge functions, we impose the following norm on $\cB$
\begin{equation}\label{normtensorproduct}
\|f\|_\cB:=\cN(\|f_1\|_{\cB_1},\|f_2\|_{\cB_2},\cdots,\|f_n\|_{\cB_n}),\ \ f\in\cB.
\end{equation}

\begin{prop}\label{tensorproduct}
The tensor product space $\cB$ with the norm (\ref{normtensorproduct}) is a uniform Banach space.
\end{prop}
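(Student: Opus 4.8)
The plan is to isolate the combinatorial structure from the Banach-space structure and reduce the statement to a single lemma: if $V_1,\dots,V_n$ are uniform Banach spaces and $\cN$ is any gauge enjoying the stated properties (strictly convex, strictly increasing in each variable, homogeneous), then the direct sum $\bigoplus_{j} V_j$ equipped with $v\mapsto\cN((\|v_j\|_{V_j})_j)$ is again uniform. Granting this lemma, the proposition follows quickly. That $\cB$ is a Banach space is routine: $x\mapsto\cN(|x|)$ is a norm on $\bR^n$, all norms on $\bR^n$ are equivalent, so $\|\cdot\|_\cB$ is equivalent to $\sum_j\|f_j\|_{\cB_j}$ and completeness is inherited from the $\cB_j$. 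Applying the lemma with $V_j=\cB_j$ and gauge $\cN$ yields that $\cB$ is uniformly convex. For uniform Fr\'{e}chet differentiability I would pass to the dual: using the assumed duality between $\cN(|\cdot|)$ and $\cN^*(|\cdot|)$ on $\bR^n$, one identifies $\cB^*$ isometrically with the $\cN^*$-sum of the duals $\cB_j^*$ via the pairing $(\mu,f)\mapsto\sum_j\mu_j(f_j)$ (the dual-norm inequality $\sum_j\|\mu_j\|\,\|f_j\|\le\cN^*((\|\mu_j\|)_j)\,\cN((\|f_j\|)_j)$ gives one inequality, norming functionals the other). Each $\cB_j$ being uniform, each $\cB_j^*$ is uniform as well, and $\cN^*$ satisfies the same hypotheses as $\cN$; so the lemma applied to $V_j=\cB_j^*$ with gauge $\cN^*$ shows $\cB^*$ is uniformly convex. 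By the classical duality between the two properties (a space is uniformly Fr\'{e}chet differentiable iff its dual is uniformly convex, reflexivity being automatic by Milman--Pettis), $\cB$ is uniformly Fr\'{e}chet differentiable, hence uniform.

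The heart of the argument is the lemma, which I would prove through the sequential form of uniform convexity: it suffices to show that whenever $x^{(k)},y^{(k)}$ are unit vectors of $V=\bigoplus_j V_j$ with $\|x^{(k)}+y^{(k)}\|_V\to2$, one has $\|x^{(k)}-y^{(k)}\|_V\to0$. Set $a_j^{(k)}=\|x_j^{(k)}\|_{V_j}$, $b_j^{(k)}=\|y_j^{(k)}\|_{V_j}$, $c_j^{(k)}=\|x_j^{(k)}+y_j^{(k)}\|_{V_j}$, and gather these into vectors $a^{(k)},b^{(k)},c^{(k)}\in\bR_+^n$ with $\cN(a^{(k)})=\cN(b^{(k)})=1$ and $\cN(c^{(k)})\to2$. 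The triangle inequality gives $c_j^{(k)}\le a_j^{(k)}+b_j^{(k)}$, and monotonicity of $\cN$ then squeezes $\cN(a^{(k)}+b^{(k)})\to2$. Two macroscopic facts follow. First, since $x\mapsto\cN(|x|)$ is a strictly convex norm on the finite-dimensional space $\bR^n$ it is uniformly convex (using the equivalence recalled in Section 4 that a finite-dimensional norm is uniform iff it is strictly convex); hence $\cN(a^{(k)})=\cN(b^{(k)})=1$ together with $\cN(a^{(k)}+b^{(k)})\to2$ forces $a^{(k)}-b^{(k)}\to0$. Second, strict monotonicity of $\cN$ forces $c_j^{(k)}-(a_j^{(k)}+b_j^{(k)})\to0$ for each $j$: a subsequential limiting gap in some coordinate would make $\cN$ of the limit of $c^{(k)}$ strictly smaller than $\cN$ of the limit of $a^{(k)}+b^{(k)}$, contradicting that both converge to $2$.

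With these reductions I would descend coordinate by coordinate and invoke uniform convexity of each $V_j$. Fix $j$ and, along a subsequence, let $a_j^{(k)}\to a_j$ (so also $b_j^{(k)}\to a_j$). If $a_j=0$ then $\|x_j^{(k)}-y_j^{(k)}\|_{V_j}\le a_j^{(k)}+b_j^{(k)}\to0$. If $a_j>0$, normalize $\hat x_j^{(k)}=x_j^{(k)}/a_j^{(k)}$ and $\hat y_j^{(k)}=y_j^{(k)}/b_j^{(k)}$; from $c_j^{(k)}-(a_j^{(k)}+b_j^{(k)})\to0$ and $a_j^{(k)},b_j^{(k)}\to a_j$ one verifies $\|\hat x_j^{(k)}+\hat y_j^{(k)}\|_{V_j}\to2$, so uniform convexity of $V_j$ gives $\|\hat x_j^{(k)}-\hat y_j^{(k)}\|_{V_j}\to0$, and undoing the normalization (using also $a_j^{(k)}-b_j^{(k)}\to0$) gives $\|x_j^{(k)}-y_j^{(k)}\|_{V_j}\to0$. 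Since every subsequence has a further subsequence along which this holds simultaneously for all $j$, we conclude $\|x_j^{(k)}-y_j^{(k)}\|_{V_j}\to0$ for each $j$, whence $\|x^{(k)}-y^{(k)}\|_V=\cN((\|x_j^{(k)}-y_j^{(k)}\|_{V_j})_j)\to0$ by continuity of $\cN$.

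The main obstacle is exactly this two-scale coupling inside the lemma: decoupling the macroscopic oscillation of the profile vectors $a^{(k)},b^{(k)}$ (handled by uniform convexity of $\cN(|\cdot|)$) from the microscopic oscillation inside each factor $V_j$ (handled by uniform convexity of $V_j$), while correctly treating coordinates whose norm degenerates to $0$. A secondary point I would verify is that $x\mapsto\cN(|x|)$ and $x\mapsto\cN^*(|x|)$ are genuinely strictly convex norms on $\bR^n$: equality in their triangle inequality forces, via strict monotonicity, $|x+y|=|x|+|y|$ (matching phases) and, via strict convexity of the gauge, $|x|=|y|$, together giving $x=y$. The remaining ingredients---completeness, the isometric identification of $\cB^*$, and the passage between uniform convexity of the dual and uniform Fr\'{e}chet differentiability---are standard.
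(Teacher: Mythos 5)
Your proposal is correct and follows essentially the same route as the paper: uniform convexity of $\cB$ is deduced directly from the strict convexity of the gauge $\cN$ on its compact finite-dimensional unit sphere combined with uniform convexity of the factors $\cB_j$, and uniform Fr\'{e}chet differentiability is then obtained by identifying $\cB^*$ isometrically with the $\cN^*$-sum of the $\cB_j^*$, running the same convexity argument there, and invoking Cudia's duality theorem. Your packaging of the convexity step as a stand-alone sequential-compactness lemma (with explicit care for coordinates whose norms degenerate to zero, and an explicit check of the dual identification the paper merely asserts) is a cleaner organization of the paper's inline $\varepsilon$--$\delta$ contradiction argument, but the ingredients are identical.
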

\begin{proof} We first show that (\ref{normtensorproduct}) defines a uniform convex norm on $\cB$. It is straightforward to verify that it is a norm. Let $\varepsilon$ be a fixed positive number and $f,g\in\cB$ be such that $\|f\|_\cB=\|g\|_\cB=1$ and $\|f-g\|_\cB\ge \varepsilon$. We have that
$$
\begin{array}{ll}
\cN(\|f_1+g_1\|_{\cB_1},\cdots,\|f_n+g_n\|_{\cB_n})&\le \cN(\|f_1\|_{\cB_1}+\|g_1\|_{\cB_1},\cdots,\|f_n\|_{\cB_n}+\|g_n\|_{\cB_n})\\
&\le
\cN(\|f_1\|_{\cB_1},\cdots,\|f_n\|_{\cB_n})+\cN(\|g_1\|_{\cB_1},\cdots,\|g_n\|_{\cB_n}).
\end{array}
$$
As all the norms on $\bR^n$ are equivalent, $\cN$ is continuous on $\bR_+^n$, and vectors $x\in\bR_+^n$ satisfying $\cN(|x|)=1$ form a compact subset in $\bR^n$. We also recall that $\cN$ is strictly increasing with respect to each of its variables and $|x|\to\cN(|x|)$ is a strictly convex norm on $\bR^n$. We conclude from these two facts and the above equation that $\cB$ is uniform convex if there exists some positive constant $\varepsilon'$ independent of $f,g$ such that
$$
\max\{\|f_j\|_{\cB_j}+\|g_j\|_{\cB_j}-\|f_j+g_j\|_{\cB_j}:j\in\bN_n\}\ge \varepsilon'
$$
or
$$
\max\{\left|\|f_j\|_{\cB_j}-\|g_j\|_{\cB_j}\right|:j\in\bN_n\}\ge \varepsilon'.
$$
Assume to the contrary that such a positive constant does not exist. It implies that for all $\beta>0$, there exists $f,g\in\cB$ that satisfy $\|f-g\|_\cB\ge \varepsilon$ and
$$
\|f_j\|_{\cB_j}+\|g_j\|_{\cB_j}-\|f_j+g_j\|_{\cB_j}<\beta,\ \ |\|f_j\|_{\cB_j}-\|g_j\|_{\cB_j}|<\beta\mbox{ for all }j\in\bN_n.
$$
Again, as any two norms on $\bR^n$ are equivalent, the inequality $\|f-g\|_\cB\ge \varepsilon$ implies that $\|f_k-g_k\|_{\cB_k}\ge \varepsilon_0>0$ for some $k\in\bN_n$ and some positive constant $\varepsilon_0$ independent of $f,g$. The conclusion is that there exists some $k\in\bN_n$ and some positive constants $M,\varepsilon_0>0$ such that for all $\beta>0$, there exists $u,v\in\cB_k$ such that $\|u\|_{\cB_k}\le M,\ \|v\|_{\cB_k}\le M$ and
\begin{equation}\label{nodifference}
\|u-v\|_{\cB_k}\ge \varepsilon_0,\quad |\|u\|_{\cB_k}-\|v\|_{\cB_k}|<\beta,\quad \|u\|_{\cB_k}+\|v\|_{\cB_k}-\|u+v\|_{\cB_k}<\beta.
\end{equation}

We shall show that the above equation contradicts the uniform convexity of $\cB_k$. We may choose $\beta$ so small that $\beta<\varepsilon_0/4$. It follows from the first two inequalities of (\ref{nodifference}) that
\begin{equation}\label{tensorproducteq1}
\|u\|_{\cB_k}\ge \frac{\varepsilon_0}4,\quad \|v\|_{\cB_k}\ge \frac{\varepsilon_0}4.
\end{equation}
To proceed, we estimate that
$$
\begin{array}{ll}
\displaystyle{\left\|\frac u{\|u\|_{\cB_k}}-\frac v{\|v\|_{\cB_k}}\right\|_{\cB_k}}&=\displaystyle{\left\|\frac u{\|u\|_{\cB_k}}-\frac v{\|u\|_{\cB_k}}+\frac v{\|u\|_{\cB_k}}-\frac v{\|v\|_{\cB_k}}\right\|_{\cB_k}}\\
&\ge \displaystyle{\frac1{\|u\|_{\cB_k}}\|u-v\|_{\cB_k}-\|v\|_{\cB_k}\left|\frac 1{\|u\|_{\cB_k}}-\frac 1{\|v\|_{\cB_k}}\right|}\\
&\ge \displaystyle{\frac{\varepsilon_0-\beta}{\|u\|_{\cB_k}}}\ge \frac{3\varepsilon_0}{4M}.
\end{array}
$$
By the uniform convexity of $\cB_k$, there exists a positive constant $\delta$ dependent on $\varepsilon_0,M$ and the space $\cB_k$ only such that
\begin{equation}\label{tensorproducteq2}
\left\|\frac u{\|u\|_{\cB_k}}+\frac v{\|v\|_{\cB_k}}\right\|_{\cB_k}<2-\delta.
\end{equation}
Finally, we get by (\ref{nodifference}), (\ref{tensorproducteq1}), and (\ref{tensorproducteq2}) that
$$
\begin{array}{ll}
\|u\|_{\cB_k}+\|v\|_{\cB_k}-\|u+v\|_{\cB_k}&=\displaystyle{\|u\|_{\cB_k}+\|v\|_{\cB_k}-\|u\|_{\cB_k}\left\|\frac u{\|u\|_{\cB_k}}+\frac v{\|v\|_{\cB_k}}+\frac v{\|u\|_{\cB_k}}-\frac v{\|v\|_{\cB_k}}\right\|_{\cB_k}}\\
&\ge \displaystyle{\|u\|_{\cB_k}+\|v\|_{\cB_k}-(2-\delta)\|u\|_{\cB_k}-\|u\|_{\cB_k}\|v\|_{\cB_k}\left|\frac1{\|u\|_{\cB_k}}-\frac1{\|v\|_{\cB_k}}\right|}\\
&\ge \displaystyle{\|u\|_{\cB_k}+\|v\|_{\cB_k}+\left|\|u\|_{\cB_k}-\|v\|_{\cB_k}\right|-(2-\delta)\|u\|_{\cB_k}}\\
&\ge \displaystyle{\delta\|u\|_{\cB_k}\ge \frac{\varepsilon_0\delta}4},
\end{array}
$$
which contradicts to the third inequality of (\ref{nodifference}) as $\beta$ can be arbitrarily small.

It is clear that $\cB^*=\{(f_j^*:j\in\bN_n):f\in\cB\}$ with the norm
$$
\|(f_j^*:j\in\bN_n)\|_{\cB^*}=\cN^*(\|f_1^*\|_{\cB_1^*},\cdots,\|f_n^*\|_{\cB_n^*}).
$$
Similar arguments to those above prove that $\cB^*$ is uniformly convex. By the fact (see \cite{Cudia}) that a Banach space is uniformly Fr\'{e}chet differentiable if and only if its dual is uniformly convex, $\cB$ is uniform.
\end{proof}

We next identify the reproducing kernel of $\cB$ with the following norm
$$
\|f\|_\cB:=\biggl(\sum_{j=1}^n\|f_j\|_{\cB_j}^p\biggr)^{1/p},\ \ f\in\cB.
$$
Let the output space $\bC^n$ be equipped with the norm of $\ell^n_r$ and let $K_j$ be the reproducing kernel of $\cB_j$, $j\in\bN_n$. The unique compatible semi-inner product on $\cB$ is given by
$$
[f,g]_\cB:=\frac{1}{\|g\|_{\cB}^{p-2}}\sum_{j=1}^n\, [f_j,g_j]_{\cB_j}\|g_j\|_{\cB_j}^{p-2},\ \ f,g\in\cB.
$$
The duality mapping on $\cB$ is hence of the form
\begin{equation}\label{dualitymappingtensorproduct}
f^*:=\left(\frac{f_j^*\|f_j\|_{\cB_j}^{p-2}}{\|f\|_{\cB}^{p-2}}:j\in\bN_n\right),\ \ f\in\cB.
\end{equation}
To find an expression for $(K(x,\cdot)\xi)^*$ for $x\in X$ and $\xi\in\bC^n$, we deduce that
$$
[f(x),\xi]_{\ell^n_r}=\frac{1}{\|\xi\|_{\ell^n_r}^{r-2}}\sum_{j=1}^n\overline{\xi_j}|\xi_j|^{r-2}f_j(x)=\frac{1}{\|\xi\|_{\ell^n_r}^{r-2}}\sum_{j=1}^n\overline{\xi_j}|\xi_j|^{r-2}[f_j,K_j(x,\cdot)]_{\cB_j}.
$$
It follows that
\begin{equation}\label{kerneltensorproductdual}
(K(x,\cdot)\xi)^*=\left(\frac{\overline{\xi_j}|\xi_j|^{r-2}}{\|\xi\|_{\ell^n_r}^{r-2}}(K_j(x,\cdot))^*:j\in\bN_n\right),\ \ x\in X,\ \xi\in\bC^n.
\end{equation}
By equations (\ref{dualitymappingtensorproduct}) and (\ref{kerneltensorproductdual}),
$$
\|K(x,\cdot)\xi\|_{\cB}=\frac1{\|\xi\|_{\ell^n_r}^{r-2}}\left(\sum_{j=1}^n \left(|\xi_j|^{r-1}\sqrt{K_j(x,x)}\right)^q\right)^{1/q}, \ \ x\in X,\ \xi\in\bC^n
$$
and
$$
K(x,y)\xi=\left(\frac{\xi_j}{|\xi_j|}K_j(x,y)\left(\frac{\|K(x,\cdot)\xi\|_{\cB}^{p-2}|\xi_j|^{r-1}}{\|\xi\|_{\ell^n_r}^{r-2}K_j(x,x)^{\frac{p-2}2}}\right)^{1/(p-1)}:j\in\bN_n\right),\ \ x,y\in X,\ \xi\in\bC^n.
$$

\subsection{Translation invariant vector-valued RKBS}

An $\bC^n$-valued RKBS $\cB$ on $\bR^d$ is said to be {\it translation invariant} if translations are isometric on $\cB$, namely, if for each $f\in\cB$ and $x\in \bR^d$, $f(\cdot+x)\in\cB$ and $\|f(\cdot+x)\|_\cB=\|f\|_\cB$. It was proved in \cite{XZrefinement} that a scalar-valued RKHS is translation invariant if and only if its reproducing kernel is of the form $\psi(x-y)$ for some scalar-valued function $\psi$. For the Banach space case, as a reproducing kernel alone does not determine its RKBS, we do not have such a characterization. Our purpose in this subsection is to construct a class of translation invariant vector-valued RKBS by the Fourier transform.

Denote by $L^1(\bR^d)$ the Banach space of Lebesgue measurable functions $f$ on $\bR^d$ equipped with the norm
$$
\|f\|_{L^1(\bR^d)}:=\int_{\bR^d}|f(x)|dx.
$$
For $\varphi\in L^1(\bR^d)$, its Fourier transform $\hat{\varphi}$ and inverse Fourier transform $\check{\varphi}$ are respectively given by
$$
\hat{\varphi}(t):=\frac1{(\sqrt{2\pi})^d}\int_{\bR^d}\varphi(x)e^{-ix\cdot t}dx,\ \ t\in\bR^d
$$
and
$$
\check{\varphi}(t):=\frac1{(\sqrt{2\pi})^d}\int_{\bR^d}\varphi(x)e^{ix\cdot t}dx,\ \ t\in\bR^d.
$$
Here $x\cdot t$ is the standard inner product on $\bR^d$.

To start the construction, we let $\phi$ be a nonnegative function in $L^1(\bR^d)$ with $\int_{\bR^d}\phi(x)dx=1$ and denote by $L^p(\bR^d,d\phi)$, $p\in(1,+\infty)$, the Banach space of Lebesgue measurable functions $f$ on $\bR^d$ with the norm
$$
\|f\|_{L^p(\bR^d,d\phi)}:=\left(\int_{\bR^d}|f(x)|^p\phi(x)dx\right)^{1/p}<+\infty.
$$
The feature space $\cW$ is chosen as
$$
\cW:=\{u=(u_1,\ldots,u_n):u_j\in L^p(\bR^d,d\phi),\ j\in\bN_n\}
$$
endowed with the norm
$$
\|u\|_{\cW}:=\biggl(\sum_{j=1}^n \|u_j\|_{L^p(\bR^d,d\phi)}^p\biggr)^{1/p}.
$$
Its dual space $\cW^*$ is given by
$$
\cW^*=\{w=(w_1,\ldots,w_n):w_j\in L^q(\bR^d,d\phi),\ j\in\bN_n\}
$$
with the norm
$$
\|w\|_{\cW^*}:=\biggl(\sum_{j=1}^n \|w_j\|_{L^q(\bR^d,d\phi)}^q\biggr)^{1/q}.
$$
The bilinear form on $\cW\times\cW^*$ is
$$
(u,w)_\cW=\sum_{j=1}^n\int_{\bR^d}u_j(x)w_j(x)\phi(x)dx,\ \ u\in\cW,\ w\in\cW^*.
$$
Moreover, the dual element of $u\in\cW$ is
$$
u^*=\left(\frac{u_j^*\|u_j\|_{L^p(\bR^d,d\phi)}^{p-2}}{\|u\|_\cW^{p-2}}:j\in\bN_n\right).
$$
By Proposition \ref{tensorproduct}, $\cW$ is a uniform Banach space. Our feature map $\Phi:\bR^d\to \cL(\cW,\bC^n)$ is then defined by
$$
\Phi(x)u:=S(u\phi)\hat{\,}(x),\ \ x\in\bR^d,\ u\in\cW,
$$
where $S$ is an invertible $n\times n$ matrix and $(u\phi)\hat{\,}:=((u_j\phi)\hat{\,}:j\in\bN_n)$. The map $\Phi$ is well-defined as $f\phi\in L^1(\bR^d)$ for all $f\in L^p(\bR^d,d\phi)$ by the H\"{o}lder inequality. We also notice that $\Phi(x)$ is continuous from $\cW$ to $\bC^n$ for each $x\in\bR^d$ by the fact that
$$
|(f\phi)\hat{\,}(x)|\le \|f\phi\|_{L^1(\bR^d)}\le \|f\|_{L^p(\bR^d,d\phi)}\mbox{ for all } f\in L^p(\bR^d,d\phi).
$$
One sees that the adjoint operator $\Phi^*:\bR^d\to \cL(\bC^n,\cW^*)$ is given by
$$
\Phi^*(x)(\eta)=\frac{e^{-ix\cdot t}}{(\sqrt{2\pi})^d}S^T\eta,\ \ x\in\bR^d,\ \eta\in\bC^n.
$$
Clearly, the denseness condition (\ref{densenessfeature2}) is satisfied. The equivalent condition (\ref{densenessfeature}) hence holds true. We obtain by Corollary \ref{constructionbyfeaturemap} that
$$
\cB:=\{f_u:=S(u\phi)\hat{\,}:u\in\cW\}
$$
with the norm $\|f_u\|_{\cB}:=\|u\|_{\cW}$ and compatible semi-inner product
$$
[S(u\phi)\hat{\,},S(v\phi)\hat{\,}\,]_\cB=[u,v]_\cW=\sum_{j=1}^n\frac1{\|v\|_{\cW}^{p-2}}\int_{\bR^d}u_j(x)\overline{v_j(x)}|v_j(x)|^{p-2}\phi(x)dx
$$
is a $\bC^n$-valued RKBS. It is translation invariant because for all $y\in\bR^d$ and $u\in\cW$
$$
\|S(u\phi)\hat{\,}(\cdot+y)\|_\cB=\|S(e^{-iy\cdot t}u\phi)\hat{\,}\|_\cB=\|e^{-y\cdot t}u\|_\cW=\|u\|_\cW=\|S(u\phi)\hat{\,}\|_\cB.
$$
To understand the reproducing kernel of $\cB$, we present the dual space of $\cB$
$$
\cB^*=\{S(u^*\phi)\check{\,}:u\in\cW\}
$$
with the norm, compatible semi-inner product and bilinear form
$$
\|S(u^*\phi)\check{\,}\|_{\cB^*}=\|u^*\|_{\cW^*},\quad [S(u^*\phi)\check{\,},S(v^*\phi)\check{\,}\,]_{\cB^*}=[v,u]_\cW,\quad (S(u\phi)\hat{\,},S(v^*\phi)\check{\,})_\cB=(u,v^*)_{\cW}.
$$
With these preparations, we identify by (\ref{reproducing}) that
$$
(K(x,\cdot)\xi)^*=S(v_{x,\xi}^*\phi)\check{\,},\ \ x\in\bR^d,\ \xi\in\bC^n,
$$
where
$$
v_{x,\xi}^*(t):=\frac{e^{-ix\cdot t}}{(\sqrt{2\pi})^d}S^T\xi^*,\ \ t\in\bR^d
$$
and $\xi^*$ is the dual element of $\xi$ in $\bC^n$ under a strictly convex norm. By the above two equations,
$$
(K(x,\cdot)\xi)^*(y)=\frac1{(\sqrt{2\pi})^d}SS^T\xi^*\hat{\phi}(x-y),\ \ x,y\in \bR^d,\ \xi\in\bC^n.
$$
We also derive that
$$
K(x,y)\xi=\frac{\|S^T\xi^*\|_{\ell^n_q}^{\frac{p-2}{p-1}}}{(\sqrt{2\pi})^d}S\left(\frac{\overline{(S^T\xi^*)_j}}{|(S^T\xi^*)_j)|^{\frac{p-2}{p-1}}}:j\in\bN_n\right)^T\hat{\phi}(y-x),\ \ x,y\in\bR^d,\ \xi\in\bC^n.
$$
We remark that when $p=2$, $\bC^n$ is endowed with the standard Euclidean norm $\|\cdot\|$, and $\phi$ is the Gaussian function, $K$ becomes the Gaussian kernel for $\bC^n$-valued RKHS
$$
K(x,y)=SS^*\exp\left(-\frac{\|x-y\|^2}2\right),\ \ x,y\in\bR^d,
$$
which confirms the validity of the above construction.

\section{Multi-task Learning with Banach Spaces}
\setcounter{equation}{0}

We discuss the applications of vector-valued RKBS to the learning of vector-valued functions from finite samples. Specifically, suppose that the unknown target function is from the input space $X$ to an output space $\Lambda$ and the observations of the function on given sampling points $\{x_j:j\in\bN_m\}\subseteq X$ are available. The observation at $x_j$, $j\in\bN_m$ could be $f(x_j)$ or the application of some continuous linear functional in $\Lambda^*$ on $f(x_j)$. And it is usually corrupted by noise in practice. To handle the noise and have a good generalization error, we shall follow the regularization methodology. For notational simplicity, let $\bx:=(x_j:j\in\bN_m)\in X^m$ and $f(\bx):=(f(x_j):j\in\bN_m)\in\Lambda^m$. A general learning scheme has the following form
\begin{equation}\label{regularization1}
\inf_{f\in\cB}Q(f(\bx))+\lambda \Psi(\|f\|_\cB),
\end{equation}
where $\cB$ is a chosen $\Lambda$-valued RKBS on $X$, $Q:\Lambda^{m}\to\bR_+$ is a loss function, $\lambda$ is a positive regularization parameter, and $\Psi:\bR_+\to\bR_+$ is called a {\it regularizer}. We are concerned with the existence and uniqueness, representation, and solving of the minimizer of (\ref{regularization1}). Before moving on to these topics, let us see some examples of learning schemes of the form (\ref{regularization1}):
\begin{itemize}
\item[---] Regularization networks
\begin{equation}\label{regularizationnetwork}
Q(f(\bx)):=\sum_{j=1}^m\|f(x_j)-\xi_j\|_\Lambda^2,\ \ \Psi(\|f\|_\cB):=\|f\|_\cB^2,
\end{equation}
where $\xi_j\in\Lambda$, $j\in\bN_m$ are observed outputs of $f$ at $\bx$. In general, one may use
\begin{equation}\label{generllossfunctionnorm}
Q(f(\bx))= P(\|f(x_1)-\xi_1\|_\Lambda,\cdots,\|f(x_m)-\xi_m\|_\Lambda),
\end{equation}
where $P$ is a function from $\bR_+^m\to\bR_+$. A particular choice of $P$ leads to the support vector machine regression.

\item[---] Support vector machine regression
$$
\Lambda:=\bR^n,\ Q(f(\bx))=\sum_{j=1}^m \max(0,\|f(x_j)-\xi_j\|_{\ell^n_1}-\varepsilon),
$$
where $\varepsilon$ is a positive constant standing for the tolerance level.

\item[---] Spectral learning: when $\cB$ is the space of sensing matrices introduced in the last section with a unitarily invariant matrix norm, (\ref{regularization1}) is the special spectral learning considered in \cite{AMP2010}.
\end{itemize}

\subsection{Existence and Uniqueness}

The weak topology is the weakest topology on a Banach space $V$ such that elements in $V^*$ remain continuous on $V$. A sequence $u_n\in V$, $n\in\bN$, is said to converge weakly to $u_0\in V$ if for each $\mu\in V^*$, $\mu(u_n)$ converges to $\mu(u_0)$. We call a regularizer $\Psi:\bR_+\to\bR_+$ {\it admissible} if it is continuous and nondecreasing on $\bR_+$ with
\begin{equation}\label{tendstoinfinity}
\lim_{t\to\infty}\Psi(t)=+\infty.
\end{equation}

\begin{prop}\label{firstsufficient}
If $Q:\Lambda^{m}\to\bR_+$ is continuous with respect to each of its variables under the weak topology on $\Lambda$ and $\Psi$ is an admissible regularizer then (\ref{regularization1}) has at least a minimizer.
\end{prop}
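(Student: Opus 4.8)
The plan is to run the direct method of the calculus of variations. First I would fix a minimizing sequence $f_n\in\cB$ for (\ref{regularization1}), so that $Q(f_n(\bx))+\lambda\Psi(\|f_n\|_\cB)$ decreases to the infimum $I$, which is finite and nonnegative since both $Q$ and $\Psi$ take values in $\bR_+$. Because the objective values are bounded above and $Q\ge0$, the numbers $\Psi(\|f_n\|_\cB)$ are bounded. The admissibility of $\Psi$, in particular the coercivity condition (\ref{tendstoinfinity}), then forces $\|f_n\|_\cB$ to be bounded: if some subsequence of norms tended to infinity, then $\Psi$ along it would tend to $+\infty$, and hence so would the objective, a contradiction.

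Next I would exploit the geometry of $\cB$. Since $\cB$ is uniform, it is in particular uniformly convex, and by the Milman--Pettis theorem a uniformly convex Banach space is reflexive. In a reflexive space every bounded sequence has a weakly convergent subsequence, so after passing to a subsequence I may assume that $f_n$ converges weakly to some $f_0\in\cB$. Note that, in contrast to Proposition \ref{pointconvergence}, we only have weak convergence here, so the conclusions of that proposition are not directly available.

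The crux is to pass to the limit in the objective. For the regularization term, the norm $\|\cdot\|_\cB$ is weakly lower semicontinuous, giving $\|f_0\|_\cB\le\liminf_n\|f_n\|_\cB$; since $\Psi$ is nondecreasing and continuous, it follows that $\Psi(\|f_0\|_\cB)\le\liminf_n\Psi(\|f_n\|_\cB)$. For the loss term I would first observe that each point evaluation $\delta_{x_j}\in\cL(\cB,\Lambda)$ is weak-to-weak continuous: for any $\mu\in\Lambda^*$ the composition $\mu\circ\delta_{x_j}$ lies in $\cB^*$, so $\mu(f_n(x_j))\to\mu(f_0(x_j))$. Hence each coordinate $f_n(x_j)$ converges weakly to $f_0(x_j)$ in $\Lambda$, i.e. $f_n(\bx)$ converges to $f_0(\bx)$ coordinatewise in the weak topology. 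Invoking the hypothesis that $Q$ is continuous with respect to each of its variables under the weak topology, I would obtain $Q(f_0(\bx))\le\liminf_n Q(f_n(\bx))$. Combining the two estimates yields $Q(f_0(\bx))+\lambda\Psi(\|f_0\|_\cB)\le\liminf_n\bigl(Q(f_n(\bx))+\lambda\Psi(\|f_n\|_\cB)\bigr)=I$, so $f_0$ attains the infimum and is a minimizer.

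I expect the main obstacle to be exactly the passage to the limit in the loss term. Weak convergence of $f_n$ in $\cB$ only yields weak convergence of each coordinate $f_n(x_j)$, not norm convergence, and the hypothesis on $Q$ is \emph{separate} continuity in the weak topology rather than joint continuity; separate continuity does not in general imply joint sequential continuity. Care is therefore needed to justify $Q(f_n(\bx))\to Q(f_0(\bx))$. One natural way to close this gap is to change the arguments one coordinate at a time, using that only the finitely many arguments $x_1,\dots,x_m$ are involved and that only the lower bound $Q(f_0(\bx))\le\liminf_n Q(f_n(\bx))$ is actually required to drive the minimization argument through.
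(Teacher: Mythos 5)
Your main line is exactly the one the paper relies on: its proof of Proposition \ref{firstsufficient} is a one-line deferral to Proposition 4 of \cite{ZZjogo}, and the adaptation it intends is precisely your direct method --- boundedness of the minimizing sequence from (\ref{tendstoinfinity}) and monotonicity of $\Psi$, reflexivity of $\cB$ from uniform convexity (Milman--Pettis) and a weakly convergent subsequence (Eberlein--\v{S}mulian), coordinatewise weak convergence $f_n(x_j)\rightharpoonup f_0(x_j)$ in $\Lambda$ because $\mu\circ\delta_{x_j}\in\cB^*$ for every $\mu\in\Lambda^*$, and weak lower semicontinuity of $\|\cdot\|_\cB$ combined with the continuity and monotonicity of $\Psi$ for the regularizer. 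All of those steps are sound, including your observation that Proposition \ref{pointconvergence} is unavailable under mere weak convergence.

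Where you hesitate --- the loss term --- your instinct is right, but your proposed repair is not. If the hypothesis is read literally as \emph{separate} continuity of $Q$ in each variable for the weak topology, the proposition is in fact false, so no one-coordinate-at-a-time patch can succeed. Concretely, take $X=\{x_1,x_2\}$, $\Lambda=\bR$, $\cB$ the two-dimensional Hilbert space of all functions from $X$ to $\bR$, $\Psi(t)=t^2$, and $Q(u,v):=1-2uv/(u^2+v^2)$ for $(u,v)\ne(0,0)$ with $Q(0,0):=1$. This $Q$ is nonnegative and continuous in each variable separately, yet along $f_t(x_1)=f_t(x_2)=t$ the objective equals $2\lambda t^2\to 0$, while it is strictly positive at every $f\in\cB$, so the infimum $0$ is not attained; since $\Lambda$ here is finite-dimensional, the same example bears on the ensuing corollary. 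Your telescoping also fails internally: in each telescoped difference the frozen coordinates still move with $n$, and separate continuity at the limit point gives no control over such moving arguments; nor does separate continuity imply the sequential lower semicontinuity $Q(f_0(\bx))\le\liminf_n Q(f_n(\bx))$, as the same $Q$ shows. The correct way to close the proof --- and evidently what the authors intend, consistent with the remark before the corollary that the condition reduces to norm continuity in finite dimensions --- is to read the hypothesis as (sequential) continuity of $Q$ on $\Lambda^m$ for the product of the weak topologies. Under that reading, coordinatewise weak convergence yields $Q(f_n(\bx))\to Q(f_0(\bx))$ outright, and your chain of inequalities finishes the argument.
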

\begin{proof}
Arguments similar to those in the proof of Proposition 4 in \cite{ZZjogo} still apply to the vector-valued case considered here.
\end{proof}

When $\Lambda$ is finite-dimensional, any two topologies on it are equivalent. Thus, continuity under the weak topology is equivalent to continuity with respect to the norm of $\Lambda$.
\begin{coro}
Let $\cB$ be finite-dimensional. If $Q:\Lambda^{m}\to\bR_+$ is continuous with respect to each of its variables and $\Psi$ is an admissible regularizer then (\ref{regularization1}) has at least a minimizer.
\end{coro}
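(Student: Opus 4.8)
The plan is to deduce this from Proposition \ref{firstsufficient} by upgrading the assumed norm-continuity of $Q$ to weak-continuity on the only arguments that the minimization actually sees. The crucial point is that finite-dimensionality of $\cB$, rather than of $\Lambda$, is exactly what localizes the problem to a finite-dimensional part of the output space, and this is where the hypothesis is used.

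First I would record that for each $j\in\bN_m$ the evaluation $\delta_{x_j}\in\cL(\cB,\Lambda)$ is a bounded linear map out of the finite-dimensional space $\cB$, so its range $\Lambda_j:=\delta_{x_j}(\cB)=\{f(x_j):f\in\cB\}$ is a finite-dimensional, hence closed, subspace of $\Lambda$. On such a subspace the relative weak topology inherited from $\Lambda$ coincides with the norm topology: by Hahn--Banach every functional in $\Lambda_j^*$ extends to one in $\Lambda^*$, so weak convergence in $\Lambda$ restricted to $\Lambda_j$ is weak convergence in $\Lambda_j$, which on a finite-dimensional space is the same as norm convergence. This is precisely the remark preceding the corollary, applied to $\Lambda_j$ in place of $\Lambda$.

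Next, since in the scheme (\ref{regularization1}) the argument $f(\bx)$ ranges only over the finite-dimensional product $\Lambda_1\times\cdots\times\Lambda_m\subseteq\Lambda^m$, the hypothesis that $Q$ is continuous in each variable with respect to the norm of $\Lambda$ forces $Q$ to be continuous in each variable with respect to the weak topology of $\Lambda$ at every point that can occur. Thus the weak-continuity requirement of Proposition \ref{firstsufficient} is met along the entire minimizing process, and together with the admissibility of $\Psi$ and the uniform (in particular reflexive) structure of $\cB$, that proposition yields a minimizer. Equivalently, one may rerun its argument directly: admissibility of $\Psi$ bounds any minimizing sequence in the $\cB$-norm, finite-dimensionality of $\cB$ supplies a norm-convergent subsequence $f_{n_k}\to f_0$, continuity of the $\delta_{x_j}$ gives $f_{n_k}(x_j)\to f_0(x_j)$ in $\Lambda$, and the topological coincidence above lets us pass the limit through $Q$.

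The main obstacle is one of matching hypotheses rather than of substance: Proposition \ref{firstsufficient} is stated with $Q$ weakly continuous on all of $\Lambda^m$, whereas here $Q$ is only assumed norm-continuous, and for infinite-dimensional $\Lambda$ norm-continuity is strictly weaker than weak-continuity. Resolving this is exactly the reachable-subspace reduction above: one must verify that weak continuity is needed only on $\Lambda_1\times\cdots\times\Lambda_m$, where it is automatic, and that no argument of $Q$ outside this finite-dimensional set is ever encountered, which holds because the infimum is taken over $f\in\cB$. If one prefers to cite Proposition \ref{firstsufficient} verbatim, it suffices to extend $Q$ to a weakly-continuous-in-each-variable function on $\Lambda^m$ by precomposing with bounded linear projections onto the complemented finite-dimensional subspaces $\Lambda_j$, which leaves the values of the objective on $\cB$ unchanged.
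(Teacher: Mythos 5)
Your proof is correct, and it is in fact more careful than the paper's own treatment: the paper states this corollary without proof, immediately after the remark that when $\Lambda$ is finite-dimensional the weak and norm topologies on it coincide, so that Proposition \ref{firstsufficient} applies verbatim. As you observe, that remark does not literally cover the corollary as stated, whose hypothesis is that $\cB$ (not $\Lambda$) is finite-dimensional; your reachable-subspace reduction --- the ranges $\Lambda_j=\delta_{x_j}(\cB)$ are finite-dimensional, hence closed and complemented, subspaces of $\Lambda$ on which, by Hahn--Banach, the relative weak topology agrees with the norm topology --- is exactly the bridge needed, and your device of precomposing $Q$ with bounded projections onto the $\Lambda_j$ legitimately produces a function that is weakly continuous in each variable and agrees with $Q$ on everything the infimum over $\cB$ can see, so Proposition \ref{firstsufficient} applies. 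Your alternative direct argument (admissibility of $\Psi$ bounds a minimizing sequence, norm-compactness of balls in the finite-dimensional $\cB$ extracts a convergent subsequence, boundedness of the $\delta_{x_j}$ passes the limit to the samples) is also sound and arguably simpler, since it bypasses weak topologies altogether. One inherited caveat, which is not a defect of your proposal relative to the paper: the final passage of the limit through $Q$, both in Proposition \ref{firstsufficient} and in your rerun, really uses continuity of $Q$ along sequences in which all $m$ coordinates converge simultaneously, whereas the stated hypothesis is only continuity in each variable separately; this looseness is already present in the paper's proposition, whose hypotheses you match, so your argument introduces no new gap.
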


We next deal with the case when the loss function has the form (\ref{generllossfunctionnorm}).
\begin{prop}\label{secondsufficient}
If $P:\bR_+^m\to\bR_+$ is continuous on $\bR_+^m$ and nondecreasing with respect to each of its variables and the regularizer $\Psi$ is admissible then
\begin{equation}\label{regularization2}
\inf_{f\in\cB}P(\|f(x_1)-\xi_1\|_\Lambda,\cdots,\|f(x_m)-\xi_m\|_\Lambda)+\lambda\Psi(\|f\|_\cB)
\end{equation}
has a minimizer.
\end{prop}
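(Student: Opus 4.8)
The plan is to prove existence by the direct method of the calculus of variations: extract from a minimizing sequence a weakly convergent subsequence using the coercivity supplied by $\Psi$ and the reflexivity of $\cB$, and then show that the objective functional in \eqref{regularization2} is weakly sequentially lower semicontinuous, so that the weak limit attains the infimum. I would stress at the outset that one cannot merely invoke Proposition \ref{firstsufficient} by regarding \eqref{regularization2} as the scheme \eqref{regularization1} with the loss \eqref{generllossfunctionnorm}: each map $\eta\mapsto\|\eta-\xi_j\|_\Lambda$ is only weakly lower semicontinuous, not weakly continuous, so the hypothesis of Proposition \ref{firstsufficient} fails. The monotonicity of $P$ is precisely what compensates for this deficiency.

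First I would establish boundedness of a minimizing sequence. Let $c\ge0$ be the infimum in \eqref{regularization2} (finite, since evaluating the objective at $f=0$ gives a finite value) and let $f_n\in\cB$ satisfy objective values tending to $c$. Because $P\ge0$, the quantities $\lambda\Psi(\|f_n\|_\cB)$ are eventually bounded by some $M$; since $\Psi$ is admissible, condition \eqref{tendstoinfinity} forces the sublevel set $\{t\in\bR_+:\Psi(t)\le M/\lambda\}$ to be bounded, whence $\sup_n\|f_n\|_\cB<\infty$. As $\cB$ is uniform it is in particular uniformly convex, hence reflexive by the Milman--Pettis theorem, and so a bounded sequence admits a subsequence $f_{n_k}$ converging weakly to some $f_0\in\cB$.

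Next I would transfer the weak convergence to the sampling points. For each $x\in X$ the point evaluation $\delta_x\in\cL(\cB,\Lambda)$, so for every $\mu\in\Lambda^*$ the functional $\mu\circ\delta_x$ lies in $\cB^*$; weak convergence $f_{n_k}\rightharpoonup f_0$ then yields $\mu(f_{n_k}(x))\to\mu(f_0(x))$, that is, $f_{n_k}(x)\rightharpoonup f_0(x)$ weakly in $\Lambda$ for every $x$. Applying the weak lower semicontinuity of the norm of $\Lambda$ to $f_{n_k}(x_j)-\xi_j$ gives $\|f_0(x_j)-\xi_j\|_\Lambda\le\liminf_k\|f_{n_k}(x_j)-\xi_j\|_\Lambda$ for each $j\in\bN_m$, and the weak lower semicontinuity of the norm of $\cB$ gives $\|f_0\|_\cB\le\liminf_k\|f_{n_k}\|_\cB$.

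Finally I would propagate these inequalities through $P$ and $\Psi$, which is the one delicate step and the place I would argue most carefully. Writing $a^{(k)}_j:=\|f_{n_k}(x_j)-\xi_j\|_\Lambda$ and $a^{(0)}_j:=\|f_0(x_j)-\xi_j\|_\Lambda$, fix $\varepsilon>0$ and set $b_j:=\max(a^{(0)}_j-\varepsilon,0)$; since $\liminf_k a^{(k)}_j\ge a^{(0)}_j$ and $a^{(k)}_j\ge0$, for $k$ large one has $a^{(k)}_j\ge b_j$ for all $j$, so the monotonicity of $P$ gives $P(a^{(k)})\ge P(b)$, where $b:=(b_j:j\in\bN_m)\in\bR_+^m$. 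Letting $k\to\infty$ yields $\liminf_k P(a^{(k)})\ge P(b)$, and letting $\varepsilon\to0^+$, so that $b\to a^{(0)}$, the continuity of $P$ gives $\liminf_k P(a^{(k)})\ge P(a^{(0)})$. The same monotonicity-plus-continuity reasoning applied to $\Psi$ gives $\Psi(\|f_0\|_\cB)\le\liminf_k\Psi(\|f_{n_k}\|_\cB)$. Adding these two lower-semicontinuity estimates shows the objective at $f_0$ is at most $c$, hence equal to $c$, so $f_0$ is a minimizer. I expect the main obstacle to be exactly this transfer: because the norm is not weakly continuous I must exploit the monotonicity of $P$ (and of $\Psi$) rather than continuity alone, while using the truncation $\max(\cdot,0)$ to stay within the domain $\bR_+^m$ on which $P$ is defined.
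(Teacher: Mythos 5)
Your proof is correct and follows essentially the same route as the paper: a minimizing sequence made bounded by the coercivity of the admissible $\Psi$, a weakly convergent subsequence from reflexivity, weak lower semicontinuity of the norms at the sampling points and in $\cB$, and then the monotonicity-plus-continuity of $P$ and $\Psi$ to pass to the limit. The only cosmetic difference is that the paper obtains $\liminf_n\|f_n(x_j)-\xi_j\|_\Lambda\ge\|f_0(x_j)-\xi_j\|_\Lambda$ via the reproducing property and the Cauchy--Schwartz inequality for the semi-inner product (testing against $f_0(x_j)-\xi_j$ itself), whereas you use general functionals $\mu\circ\delta_{x_j}\in\cB^*$ with $\mu\in\Lambda^*$ --- the same fact proved with a different choice of test functional.
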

\begin{proof}
Set
$$
\cE(f):=P(\|f(x_1)-\xi_1\|_\Lambda,\cdots,\|f(x_m)-\xi_m\|_\Lambda)+\lambda\Psi(\|f\|_\cB),\ \ f\in\cB.
$$
and $\varepsilon_0:=\inf_{f\in\cB}\cE(f)$. Using the arguments similar to those in \cite{ZZjogo}, we can find a sequence $f_n\in\cB$, $n\in\bN$ that is weakly convergent to some $f_0\in\cB$, and some $\alpha>0$ such that $\|f_0\|_\cB\le\alpha$ and $\|f_n\|_\cB\le \alpha$ for all $n\in\bN$. Moreover, for any $\epsilon>0$ there exists some $N\in\bN$ such that for $n>N$,
\begin{equation}\label{secondsufficienteq1}
\Psi(\|f_n\|_\cB)\ge \Psi(\|f_0\|_\cB)-\epsilon.
\end{equation}
Since $f_n$ converges weakly to $f_0$, by (\ref{reproducing})
$$
\lim_{n\to\infty}[f_n(x_j)-\xi_j,f_0(x_j)-\xi_j]_\Lambda=[f_0(x_j)-\xi_j,f_0(x_j)-\xi_j]_\Lambda\mbox{ for all }j\in\bN_m.
$$
It implies by the Cauchy-Schwartz inequality of semi-inner products that for any $\delta>0$ there exists some $N'\in\bN$ such that for $n>N'$
\begin{equation}\label{secondsufficienteq2}
\|f_n(x_j)-\xi_j\|_\cB\ge \|f_0(x_j)-\xi_j\|_\cB-\delta\mbox{ for all }j\in\bN_m.
\end{equation}
Since
$$
\|f_0(x_j)-\xi_j\|_\cB,\ \|f_n(x_j)-\xi_j\|_\cB\le \max\{\alpha\|\delta_{x_j}\|_{\cL(\cB,\Lambda)}+\|\xi_j\|_\Lambda:j\in\bN_m\}
$$
and $\Psi$ is uniformly continuous on compact subsets of $\bR_+^m$ and is nondecreasing with respect to each of its variables, we get by (\ref{secondsufficienteq2}) that
$$
P(\|f_n(x_1)-\xi_1\|_\Lambda,\cdots,\|f_n(x_m)-\xi_m\|_\Lambda)\ge P(\|f_0(x_1)-\xi_1\|_\Lambda,\cdots,\|f_0(x_m)-\xi_m\|_\Lambda)-\epsilon
$$
for sufficiently large $n$. This combined with (\ref{secondsufficienteq1}) proves that $f_0$ is a minimizer of (\ref{regularization2}).
\end{proof}

For uniqueness of the minimizer, we have the following routine result.
\begin{prop}
If $Q$ is convex on $\Lambda^m$ and $\Psi$ is strictly increasing and strictly convex then (\ref{regularization1}) has at most one minimizer.
\end{prop}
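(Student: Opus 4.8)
The plan is a standard convexity argument by contradiction, with particular care taken at the point where the two candidate minimizers happen to have equal norm. Suppose $f_1,f_2\in\cB$ are two distinct minimizers of (\ref{regularization1}), both attaining the infimal value, which I denote $\varepsilon_0$. I would examine the midpoint $f:=(f_1+f_2)/2$ and show that its objective value is strictly below $\varepsilon_0$, contradicting minimality and hence forcing $f_1=f_2$.

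First I would dispose of the loss term. Since the evaluation map $f\mapsto f(\bx)=(f(x_j):j\in\bN_m)$ is linear, we have $f(\bx)=\tfrac12 f_1(\bx)+\tfrac12 f_2(\bx)$, so convexity of $Q$ yields $Q(f(\bx))\le\tfrac12 Q(f_1(\bx))+\tfrac12 Q(f_2(\bx))$. It therefore remains to extract a \emph{strict} inequality from the regularization term, namely
$$\Psi(\|f\|_\cB)<\tfrac12\Psi(\|f_1\|_\cB)+\tfrac12\Psi(\|f_2\|_\cB);$$
multiplying this by $\lambda>0$ and adding the loss estimate gives $Q(f(\bx))+\lambda\Psi(\|f\|_\cB)<\varepsilon_0$, the contradiction we seek.

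For the strict estimate on $\Psi$ I would split into two cases according to whether the norms $\|f_1\|_\cB$ and $\|f_2\|_\cB$ agree. If $\|f_1\|_\cB\ne\|f_2\|_\cB$, then using that $\Psi$ is nondecreasing together with the triangle inequality $\|f\|_\cB\le(\|f_1\|_\cB+\|f_2\|_\cB)/2$, and then the strict convexity of $\Psi$, I obtain
$$\Psi(\|f\|_\cB)\le\Psi\Bigl(\tfrac{\|f_1\|_\cB+\|f_2\|_\cB}{2}\Bigr)<\tfrac12\Psi(\|f_1\|_\cB)+\tfrac12\Psi(\|f_2\|_\cB).$$

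The remaining case, $\|f_1\|_\cB=\|f_2\|_\cB=:c$, is the crux, and it is where strict convexity of $\Psi$ alone is not enough. If $c=0$ then $f_1=f_2=0$, contradicting $f_1\ne f_2$, so $c>0$. Here I would appeal to the geometry of the ambient space: as a uniform Banach space $\cB$ is uniformly convex, hence strictly convex, so $f_1/c$ and $f_2/c$ are distinct unit vectors and $\|f_1+f_2\|_\cB<2c$, i.e.\ $\|f\|_\cB<c$. Strict monotonicity of $\Psi$ then gives $\Psi(\|f\|_\cB)<\Psi(c)=\tfrac12\Psi(\|f_1\|_\cB)+\tfrac12\Psi(\|f_2\|_\cB)$. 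In both cases the strict inequality holds, completing the contradiction. The main obstacle is exactly this equal-norm case: one must notice that the two hypotheses on $\Psi$ play complementary roles — strict convexity when the norms differ, strict monotonicity when they coincide — and that the coinciding-norm situation cannot be handled by $\Psi$ at all and instead forces an appeal to the uniform (hence strict) convexity of $\cB$.
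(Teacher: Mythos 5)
Your proof is correct and is essentially the paper's argument in expanded form: the paper simply asserts that $f\mapsto Q(f(\bx))+\lambda\Psi(\|f\|_\cB)$ is strictly convex on $\cB$, and your two-case analysis (strict convexity of $\Psi$ when the norms differ, strict convexity of the uniformly convex norm plus strict monotonicity of $\Psi$ when they coincide) is precisely the content hidden behind the word ``straightforward.'' In particular you correctly identified that the equal-norm case cannot be handled by $\Psi$ alone and requires the strict convexity of $\cB$ inherited from uniform convexity.
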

\begin{proof}
It is straightforward that the function mapping $f\in\cB$ to $Q(f(\bx))+\lambda \Psi(\|f\|_\cB)$ is strictly convex on $\cB$.
\end{proof}

We close this subsection with the following corollary to the above propositions.
\begin{coro}
Let $\cB$ be a $\Lambda$-valued RKBS on $X$. Then $\inf_{f\in\cB}\cE(f)$ has a unique minimizer for the following choices of regularization functionals:
$$
\cE(f)=\sum_{j=1}^m \|f(x_j)-\xi_j\|_\Lambda^p+\lambda \|f\|_\cB^r,\quad p\in[1,+\infty),\ r\in(1,+\infty),
$$
$$
\cE(f)=\sum_{j=1}^m \max(0,\|f(x_j)-\xi_j\|_\Lambda-\varepsilon)+\lambda \|f\|_\cB^r,\quad r\in(1,+\infty),\ \varepsilon>0.
$$
\end{coro}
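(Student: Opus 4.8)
The plan is to read off both functionals as special cases of the regularization schemes already analyzed, and then simply combine the existence assertion of Proposition \ref{secondsufficient} with the uniqueness assertion of the preceding proposition. Thus the argument splits into an existence part and a uniqueness part, each amounting to verifying the hypotheses of the relevant proposition for the concrete choices of loss and regularizer.

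For existence, I would observe that both functionals have the form (\ref{regularization2}): the first corresponds to $P(t_1,\dots,t_m):=\sum_{j=1}^m t_j^p$ and the second to $P(t_1,\dots,t_m):=\sum_{j=1}^m\max(0,t_j-\varepsilon)$, with $\Psi(t):=t^r$ in both cases. One then checks that each $P$ is continuous on $\bR_+^m$ and nondecreasing in each variable (immediate, since $t\mapsto t^p$ and $t\mapsto\max(0,t-\varepsilon)$ are continuous and nondecreasing on $\bR_+$ for $p\ge1$ and $\varepsilon>0$), and that $\Psi(t)=t^r$ with $r>1$ is admissible, being continuous, nondecreasing, and tending to $+\infty$ as $t\to+\infty$. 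Proposition \ref{secondsufficient} then yields a minimizer in each case.

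For uniqueness, I would invoke the uniqueness proposition above, which requires $Q$ to be convex on $\Lambda^m$ and $\Psi$ to be strictly increasing and strictly convex. Here $Q(\eta_1,\dots,\eta_m)=\sum_{j=1}^m\|\eta_j-\xi_j\|_\Lambda^p$ is convex because $\eta_j\mapsto\|\eta_j-\xi_j\|_\Lambda$ is convex and $t\mapsto t^p$ is convex and nondecreasing on $\bR_+$ for $p\ge1$, so each summand is a convex, nondecreasing composition of a convex map; likewise $\sum_{j=1}^m\max(0,\|\eta_j-\xi_j\|_\Lambda-\varepsilon)$ is convex since $\max(0,\cdot)$ is convex and nondecreasing. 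Finally $\Psi(t)=t^r$ is strictly increasing and strictly convex on $\bR_+$ for $r>1$. Combining existence and uniqueness gives the claim.

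The verifications are routine, so there is no serious obstacle; the one point worth care is that uniqueness for $p=1$ cannot come from strict convexity of $Q$ (which fails, since the norm is not strictly convex as a function on $\Lambda^m$), but rather flows entirely from the strict convexity of $f\mapsto\Psi(\|f\|_\cB)$. This in turn relies on $\cB$ being strictly convex --- guaranteed because $\cB$ is uniform --- so that distinct functions of equal norm admit a chord of strictly smaller norm, together with $\Psi$ being strictly increasing and strictly convex. These are exactly the ingredients used in the proof of the uniqueness proposition, which I would simply cite rather than reprove.
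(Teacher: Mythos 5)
Your proposal is correct and follows exactly the route the paper intends: the paper states this result as an immediate corollary of Proposition \ref{secondsufficient} (existence, with your choices of $P$ and the admissible regularizer $\Psi(t)=t^r$) and the uniqueness proposition (convex $Q$, strictly increasing and strictly convex $\Psi$), offering no further proof. Your added remark that for $p=1$ the uniqueness must come from strict convexity of $f\mapsto\Psi(\|f\|_\cB)$ --- which uses strict convexity of the norm of $\cB$, guaranteed by uniform convexity --- correctly fills in the one nontrivial detail hidden behind the paper's one-line proof of the uniqueness proposition.
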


\subsection{The representer theorem}

We study the representation of the minimizer of (\ref{regularization1}) by the reproducing kernel $K$ of $\cB$. The result, known as the representer theorem in the scalar-valued and vector-valued RKHS cases, was due to \cite{KW} and \cite{MP2005}, respectively. For more references on this subject for the RKHS case, see \cite{AMP2009,ScAS} and the references cited therein. We established the representer theorem for scalar-valued RKBS in \cite{ZXZ,ZZjogo}. The representer theorem is closely related to the minimal norm interpolation. We start with examining the latter problem.

Let $\bx:=(x_j:j\in\bN_m)\in X^m$ be a fixed set of sampling points. Denote for each $\bz:=(\eta_j:j\in\bN_m)\in\Lambda^m$ by $\cI_\bz$ the set of functions $f\in\cB$ that satisfy the interpolation condition $f(\bx)=\bz$. We need two notations for the proof of the representer theorem for the minimal norm interpolation. For a subset $A$ of Banach space $V$, $A^\perp$ stands for the set of all the continuous linear functionals on $V$ that vanish on $A$, and for $B\subseteq V^*$, ${\,}^\perp B:=\{u\in V:\mu(u)=0\mbox{ for all }\mu\in B\}$.



\begin{lemma}\label{MNIlemma}
Let $\bz\in\Lambda^m$. If $\cI_\bz$ is nonempty then the minimal norm interpolation problem
\begin{equation}\label{MNI}
\inf\{\|f\|_\cB:f\in\cI_\bz\}
\end{equation}
has a unique minimizer. A function $f_0\in\cB$ is the minimizer of (\ref{MNI}) if and only if $f(\bx)=\bz$ and
\begin{equation}\label{representerforMNI}
f_0^*\in \overline{\span}\left\{(K(x_j,\cdot)\xi)^*:j\in\bN_m,\ \xi\in\Lambda\right\}.
\end{equation}
\end{lemma}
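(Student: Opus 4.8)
The plan is to treat (\ref{MNI}) as the problem of best approximation of the origin by a closed convex set, and then to characterize the minimizer through the first-order condition supplied by the semi-inner product.

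First I would record that $\cI_\bz$ is a closed convex subset of $\cB$: it is the preimage of the single point $\bz$ under the continuous linear map $f\mapsto f(\bx)=(\delta_{x_1}f,\dots,\delta_{x_m}f)$, and it is assumed nonempty. Since $\cB$ is uniformly convex, it is reflexive and every nonempty closed convex subset admits a unique element of minimal norm, namely the best approximation of $0$; this is exactly the fact quoted after the definition of uniform convexity. Existence and uniqueness are settled at once.

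For the characterization, fix any $f_0\in\cI_\bz$ and set $\cI_0:=\{g\in\cB:g(x_j)=0,\ j\in\bN_m\}$, a closed subspace, so that $\cI_\bz=f_0+\cI_0$. Then $f_0$ solves (\ref{MNI}) iff $\|f_0\|_\cB\le\|f_0+g\|_\cB$ for every $g\in\cI_0$, i.e.\ iff $f_0$ minimizes $\tfrac12\|f_0+g\|_\cB^2$ over the subspace $\cI_0$. Because $\cB$ is uniformly Fr\'echet differentiable, Giles' formula gives $\lim_{t\to0}(\|h+tg\|_\cB-\|h\|_\cB)/t=\Re[g,h]_\cB/\|h\|_\cB$ for $h\ne0$, whence the Gateaux derivative of $\tfrac12\|\cdot\|_\cB^2$ at $f_0$ in the direction $g$ equals $\Re[g,f_0]_\cB=\Re f_0^*(g)$ (the case $f_0=0$ being immediate, as then $\bz=0$ and the minimizer is $0$). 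Necessity of the first-order condition forces $\Re[g,f_0]_\cB=0$ for all $g\in\cI_0$; replacing $g$ by $ig$, legitimate since $\cI_0$ is a complex subspace, removes the real part and yields $f_0^*(g)=[g,f_0]_\cB=0$, i.e.\ $f_0^*\in\cI_0^\perp$. Conversely, convexity of $\tfrac12\|\cdot\|_\cB^2$ gives the subgradient inequality $\tfrac12\|f_0+g\|_\cB^2\ge\tfrac12\|f_0\|_\cB^2+\Re f_0^*(g)$, so $f_0^*\in\cI_0^\perp$ already implies $\|f_0+g\|_\cB\ge\|f_0\|_\cB$ for all $g\in\cI_0$. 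Thus $f_0^*\in\cI_0^\perp$ is both necessary and sufficient. It remains to identify $\cI_0^\perp$ with the closed span in (\ref{representerforMNI}): writing $N:=\{(K(x_j,\cdot)\xi)^*:j\in\bN_m,\ \xi\in\Lambda\}\subseteq\cB^*$, the reproducing property (\ref{reproducing}) gives $(K(x_j,\cdot)\xi)^*(g)=[g,K(x_j,\cdot)\xi]_\cB=[g(x_j),\xi]_\Lambda$, so $g\in\cI_0$ iff $(K(x_j,\cdot)\xi)^*(g)=0$ for all $j,\xi$; that is, $\cI_0={}^\perp N={}^\perp(\overline{\span}\,N)$. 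Since $\cB$ is reflexive, the norm-closed subspace $\overline{\span}\,N$ of $\cB^*$ is weak-$*$ closed, so the annihilator identity $({}^\perp W)^\perp=W$ applies with $W=\overline{\span}\,N$, yielding $\cI_0^\perp=\overline{\span}\,N$ and hence exactly (\ref{representerforMNI}).

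I expect the main obstacle to be the variational step rather than the duality bookkeeping: because the semi-inner product is non-additive in its second slot, one cannot expand $\|f_0+g\|_\cB^2$ by a Hilbert-space computation, and the entire argument must be routed through the smoothness-derived identity for the Gateaux derivative (for necessity) together with the convexity subgradient inequality (for sufficiency). Care is also needed to pass from $\Re[g,f_0]_\cB=0$ to $[g,f_0]_\cB=0$ using the complex structure of $\cI_0$, and to invoke reflexivity, a consequence of uniform convexity, in order to close the annihilator identity $(\,{}^\perp W)^\perp=W$.
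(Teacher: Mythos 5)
Your proof is correct, and its skeleton matches the paper's: reduce (\ref{MNI}) to best approximation of the origin by the closed convex set $\cI_\bz$ (unique by uniform convexity), reduce the characterization to $f_0^*\in(\cI_0)^\perp$, identify $\cI_0={}^\perp\{(K(x_j,\cdot)\xi)^*:j\in\bN_m,\,\xi\in\Lambda\}$ via the reproducing property, and close with the annihilator identity. The genuine difference is at the central variational step: the paper simply cites Giles' semi-inner-product characterization of best approximation ($\|f_0+g\|_\cB\ge\|f_0\|_\cB$ for all $g\in\cI_0$ iff $[g,f_0]_\cB=0$ for all $g\in\cI_0$), whereas you re-derive it from scratch --- necessity from the norm-derivative formula supplied by uniform Fr\'{e}chet differentiability (plus the $g\mapsto ig$ trick to upgrade $\Re[g,f_0]_\cB=0$ to $[g,f_0]_\cB=0$, with the $f_0=0$ case split off since the derivative formula needs $h\ne0$), and sufficiency from the subgradient inequality $\tfrac12\|f_0+g\|_\cB^2\ge\tfrac12\|f_0\|_\cB^2+\Re f_0^*(g)$, which holds by Cauchy--Schwartz and needs no smoothness. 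This buys self-containedness at the cost of length. You are also more precise than the paper at the final step: the paper asserts $({}^\perp B)^\perp=\overline{\span}\,B$ ``by the Hahn--Banach theorem,'' but in a general Banach space that identity yields only the weak-$*$ closure of $\span B$; your explicit appeal to reflexivity (a consequence of uniform convexity) to conclude that the norm-closed subspace $\overline{\span}\,N\subseteq\cB^*$ is weak-$*$ closed is exactly the justification the paper leaves implicit. No gaps.
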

\begin{proof}
Clearly, $\cI_\bz$ is a closed convex subset of $\cB$. A minimizer of (\ref{MNI}) is the best approximation in $\cI_\bz$ to the origin $0$ of $\cB$. It is well-known that a closed convex subset in a uniform convex Banach space has a unique best approximation to a point in the same space. By this fact, (\ref{MNI}) has a unique minimizer. It is also trivial that $f_0\in\cI_\bz$ is the minimizer if and only if
$$
\|f_0+g\|_\cB\ge \|f_0\|_\cB\mbox{ for all }g\in\cI_0.
$$
By the characterization of best approximation by the semi-inner product established in \cite{Giles}, the above equation holds if and only if
$$
[g,f_0]=0\mbox{ for all }g\in\cI_0,
$$
which can be equivalently expressed as $f_0^*\in (\cI_0)^\perp$. Note that $g\in\cI_0$ if and only if
$$
[g,K(x_j,\cdot)\xi]_\cB=[g(x_j),\xi]_\Lambda=0\mbox{ for all }j\in\bN_m\mbox{ and }\xi\in\Lambda,
$$
which is equivalent to that
$$
g\in {\,}^\perp\left\{( K(x_j,\cdot)\xi)^*:j\in\bN_m,\ \xi\in\Lambda\right\}.
$$
We conclude that $f_0\in\cI_\bz$ is the minimizer of (\ref{MNI}) if and only if
$$
f_0^*\in \left({\,}^\perp\left\{( K(x_j,\cdot)\xi)^*:j\in\bN_m,\ \xi\in\Lambda\right\}\right)^\perp.
$$
By the Hahn-Banach theorem, for each $B\in\cB^*$, $({\,}^\perp B)^\perp=\overline{\span} B$. The proof is hence complete.
\end{proof}

The above lemma enables us to prove the main result of the section without much effort.

\begin{theorem}\label{representermain}
Suppose that (\ref{regularization1}) has at least a minimizer. If the regularizer is nondecreasing then (\ref{regularization1}) has a minimizer that satisfies (\ref{representerforMNI}). If $\Psi$ is strictly increasing then every minimizer of (\ref{regularization1}) must satisfy (\ref{representerforMNI}).
\end{theorem}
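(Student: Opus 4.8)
The plan is to reduce the representer theorem to the minimal norm interpolation result of Lemma \ref{MNIlemma} by freezing the sampling values of a given minimizer. First I would take any minimizer $f_0$ of (\ref{regularization1}), which exists by hypothesis, and set $\bz:=f_0(\bx)\in\Lambda^m$. Then $f_0\in\cI_\bz$, so $\cI_\bz$ is nonempty and Lemma \ref{MNIlemma} furnishes a unique minimal norm interpolant $g_0\in\cI_\bz$ that satisfies (\ref{representerforMNI}) and obeys $\|g_0\|_\cB\le\|f_0\|_\cB$.

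The key observation is that $g_0$ and $f_0$ share the same sampling values, $g_0(\bx)=\bz=f_0(\bx)$, so the loss terms agree: $Q(g_0(\bx))=Q(f_0(\bx))$. For the first assertion, when $\Psi$ is nondecreasing the inequality $\|g_0\|_\cB\le\|f_0\|_\cB$ gives $\Psi(\|g_0\|_\cB)\le\Psi(\|f_0\|_\cB)$, hence the objective value at $g_0$ does not exceed that at $f_0$. Since $f_0$ is a minimizer, $g_0$ must be a minimizer as well, and it already satisfies (\ref{representerforMNI}) by its construction in Lemma \ref{MNIlemma}.

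For the second assertion, suppose $\Psi$ is strictly increasing and let $f_0$ be an arbitrary minimizer. I would argue that $\|g_0\|_\cB=\|f_0\|_\cB$: were $\|g_0\|_\cB<\|f_0\|_\cB$, strict monotonicity of $\Psi$ would make the objective strictly smaller at $g_0$ while leaving the loss term unchanged, contradicting the minimality of $f_0$. Thus $\|f_0\|_\cB=\|g_0\|_\cB=\inf\{\|f\|_\cB:f\in\cI_\bz\}$, so $f_0$ is itself a minimal norm interpolant in $\cI_\bz$. The uniqueness part of Lemma \ref{MNIlemma} then forces $f_0=g_0$, and therefore $f_0$ satisfies (\ref{representerforMNI}).

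The argument is essentially a one-step reduction requiring no new estimates, and the only point to watch is that the two halves of Lemma \ref{MNIlemma} are invoked for distinct purposes: the first assertion uses only that the minimal norm interpolant satisfies (\ref{representerforMNI}) and has no larger norm, whereas the second assertion crucially relies on the \emph{uniqueness} of that interpolant, itself a consequence of the uniform convexity of $\cB$, to conclude $f_0=g_0$. This is the one place where strict monotonicity of $\Psi$ and uniqueness must be combined carefully.
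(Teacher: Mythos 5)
Your proof is correct and follows essentially the same route as the paper: both reduce to Lemma \ref{MNIlemma} by freezing the sampling values $\bz=f_0(\bx)$ and comparing the minimizer with the minimal norm interpolant. The only cosmetic difference is in the second assertion, where the paper argues by contraposition (if $f$ fails (\ref{representerforMNI}) then $f\ne g_0$, hence $\|f\|_\cB>\|g_0\|_\cB$ and $f$ cannot be a minimizer), while you argue directly that $\|f_0\|_\cB=\|g_0\|_\cB$ and invoke uniqueness of the minimal norm interpolant to conclude $f_0=g_0$ --- the same two ingredients, strict monotonicity of $\Psi$ and uniqueness from uniform convexity, deployed in the opposite order.
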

\begin{proof}
Let $f\in\cB$ be a minimizer of (\ref{regularization1}). We let $f_0$ be the minimizer of
\begin{equation}\label{representermaineq1}
\min\{\|g\|_\cB:g\in\cI_{f(\bx)}\}.
\end{equation}
Then $\|f_0\|_\cB\le \|f\|_\cB$ and $f_0(\bx)=f(\bx)$. It follows that $Q(f_0(\bx))=Q(f(\bx))$ while $\Psi(\|f_0\|_\cB)\le \Psi(\|f\|_\cB)$ as $\Psi$ is nondecreasing. Therefore, $f_0$ is a minimizer of (\ref{regularization1}). By Lemma \ref{MNIlemma}, $f_0$ satisfies (\ref{representerforMNI}).

Suppose that $\Psi$ is strictly increasing and $f\in\cB$ does not satisfy (\ref{representerforMNI}). Again, we let $f_0\in\cB$ be the minimizer of (\ref{representermaineq1}). As $f$ does not satisfy (\ref{representerforMNI}), $f\ne f_0$ by Lemma \ref{MNIlemma}. Thus, $\|f\|_\cB>\|f_0\|_\cB$. The consequence is that while $Q(f(\bx))=Q(f_0(\bx))$, $\Psi(\|f\|_\cB)> \Psi(\|f_0\|_\cB)$ because $\Psi$ is strictly increasing. Therefore, $f$ can not be the minimizer of (\ref{regularization1}). The proof is complete.
\end{proof}

\subsection{Characterization equations}

We consider the solving of the regularized learning scheme (\ref{regularization1}) in this subsection. We try to make use of the representer theorem. To this end, we note that the output space $\Lambda$ is usually finite-dimensional in practice. Let us assume that (\ref{regularization1}) has a unique minimizer $f_0$, $\dim(\Lambda)=n<+\infty$, and $\{e_l^*:l\in\bN_n\}$ is a basis for $\cB^*$. In this case, we see by property (\ref{property5}) of the reproducing kernel $K$ that $f_0$ has the form
\begin{equation}\label{repreenterfinitedimension}
f_0^*=\sum_{j=1}^m (K(x_j,\cdot)\eta_j)^*
\end{equation}
for some $\eta_j\in\Lambda$, $j\in\bN_m$. It hence suffices to find the finite model parameters $\eta_j$'s in order to obtain $f_0$. To this end, one may substitute (\ref{repreenterfinitedimension}) into (\ref{regularization1}) to convert the original minimization problem in a potentially infinite-dimensional Banach space into one about the finitely many parameters $\eta_j$'s. We next show how the reformulation can be done under the finite-dimensionality assumption on $\Lambda$. As each $\xi\in\Lambda$ is uniquely determined by $\{[\xi,e_l]_\Lambda:l\in\bN_n\}$. We may rewrite the regularization functional as
\begin{equation}\label{regularization3}
\min_{f\in\cB}\cR(([f(\xi_j),e_l]_\Lambda:j\in\bN_m,\ l\in\bN_n))+\lambda\Psi(\|f\|_\cB)
\end{equation}
for some function $\cR:\bC^{m\times n}\to\bR_+$. By (\ref{reproducing}) and (\ref{sipondual})
$$
[f(\xi_j),e_l]_\Lambda=[f,K(x_j,\cdot)e_l]_\cB=[(K(x_j,\cdot)e_l)^*,f^*]_{\cB^*}.
$$
For the regularizer part, we have by (\ref{equalnormduality}) that $\|f\|_\cB=\|f^*\|_{\cB^*}$. Therefore, the parameters $\eta_j$'s in (\ref{repreenterfinitedimension}) are the minimizer of
$$
\min_{\tau\in\Lambda^m}\cR\left(\left(\left[(K(x_j,\cdot)e_l)^*,\sum_{k=1}^m(K(x_k,\cdot)\tau_k)^*\right]_{\cB^*}:j\in\bN_m,\ l\in\bN_n\right)\right)+\lambda\Psi\biggl(\biggl\|\sum_{j=1}^m (K(x_j,\cdot)\tau_j)^*\biggr\|_{\cB^*}\biggr).
$$
Unlike the RKHS case, the above minimization problem is usually non-convex with respect to $\tau_j^*$ or $\tau_j$ even when $\cR$ and $\Psi$ are both convex. The reason is that a semi-inner product is generally non-additive with respect to its second variable.

In some occasions, one is able to derive a characterization equation for the minimization problem (\ref{regularization1}), which together with the representer theorem constitutes a powerful tool in converting the minimization into a system of equations about the model parameters in the representer theorem. We shall derive characterization equations for the particular example of (\ref{regularization1})
\begin{equation}\label{regularization4}
\min_{f\in\cB}\sum_{j=1}^m \varphi(\|f(x_j)-\xi_j\|_\Lambda)+\lambda\Psi(\|f\|_\cB),
\end{equation}
where $\xi_j$ stands for the observation of the target function at $x_j$ for $j\in\bN_m$, and $\varphi$ is a chosen loss function from $\bR_+$ to $\bR_+$. We shall assume that both $\varphi$ and $\Psi$ are continuously differentiable and
\begin{equation}\label{derivativevanishingat0}
\lim_{t\to 0^+}\frac{\varphi'(t)}{t}=0.
\end{equation}
For convenience, we make the convention that $0/0:=0$. The next two results hold for any $\Lambda$ regardless of its dimension.
\begin{theorem}\label{characterizationeq}
Let $\Psi$ and $\varphi$ be continuously differentiable on $\bR_+$ with (\ref{derivativevanishingat0}). A function $f_0\ne0$ is the minimizer of (\ref{regularization4}) if and only if
\begin{equation}\label{charaeqnonzero}
\lambda \frac{\Psi'(\|f_0\|_\cB)}{\|f_0\|_\cB}f_0^*+\sum_{j=1}^m\frac{\varphi'(\|f_0(x_j)-\xi_j\|_\cB)}{\|f_0(x_j)-\xi_j\|_\cB}(K(x_j,\cdot)(f_0(x_j)-\xi_j))^*=0.
\end{equation}
The zero function is the minimizer of (\ref{regularization4}) if and only if
\begin{equation}\label{charaeqzero}
\|T\|_{\cB^*}\le \lambda \Psi'(0),
\end{equation}
where
$$
T:=\sum_{j=1}^m\frac{\varphi'(\|\xi_j\|_\Lambda)}{\|\xi_j\|_\Lambda}(K(x_j,\cdot)\xi_j)^*.
$$
\end{theorem}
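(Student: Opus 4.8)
The plan is to treat the objective of (\ref{regularization4}),
$$
\cE(f):=\sum_{j=1}^m \varphi(\|f(x_j)-\xi_j\|_\Lambda)+\lambda\Psi(\|f\|_\cB),\ \ f\in\cB,
$$
as a real-valued function on $\cB$, to write down its first-order (G\^{a}teaux) conditions, and to convert everything into $\cB^*$ by means of the reproducing property (\ref{reproducing}). The tool that makes this possible is the derivative of the norm in a uniform Banach space: since $\cB$ is uniformly Fr\'{e}chet differentiable, the limit (\ref{diffentiability}) exists and, as shown by Giles, equals
$$
\lim_{t\to 0}\frac{\|f+tg\|_\cB-\|f\|_\cB}{t}=\frac{\Re[g,f]_\cB}{\|f\|_\cB},\ \ f\ne0,
$$
with the analogous formula on $\Lambda$. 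I would first record this identity, and note that assumption (\ref{derivativevanishingat0}) forces $\varphi'(0)=0$, so that $t\mapsto\varphi(|t|c)$ is differentiable at $t=0$ with vanishing derivative; this is exactly what the convention $0/0:=0$ encodes in the coefficients $\varphi'(\|\cdot\|)/\|\cdot\|$.

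For the nonzero case I would fix $g\in\cB$ and differentiate $t\mapsto\cE(f_0+tg)$ at $t=0$, using that point evaluation is linear so that $(f_0+tg)(x_j)=f_0(x_j)+tg(x_j)$. The chain rule together with the norm-derivative identity gives the regularizer contribution $\lambda\frac{\Psi'(\|f_0\|_\cB)}{\|f_0\|_\cB}\Re[g,f_0]_\cB$ and each loss contribution $\frac{\varphi'(\|u_j\|_\Lambda)}{\|u_j\|_\Lambda}\Re[g(x_j),u_j]_\Lambda$, where $u_j:=f_0(x_j)-\xi_j$ and the terms with $u_j=0$ drop out by the previous remark. The reproducing property (\ref{reproducing}) rewrites $[g(x_j),u_j]_\Lambda=[g,K(x_j,\cdot)u_j]_\cB$, and since $[g,f]_\cB=f^*(g)$ the whole derivative becomes $\Re\,\mu(g)$, where $\mu\in\cB^*$ is precisely the left-hand side of (\ref{charaeqnonzero}). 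If $f_0\ne0$ is a minimizer then $t=0$ minimizes this differentiable scalar function, so $\Re\,\mu(g)=0$ for all $g$; replacing $g$ by $ig$ removes the real part and yields $\mu=0$, which is (\ref{charaeqnonzero}). This settles necessity.

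The converse is where I expect the real difficulty, since $\cE$ is not differentiable at the origin and is not obviously convex. My plan is to pass the monotone/convex structure of $\varphi$ and $\Psi$ through the convexity of the norm: from $f_0\ne0$ and differentiability one has the subgradient inequality $\|f\|_\cB\ge\|f_0\|_\cB+\Re[f-f_0,f_0]_\cB/\|f_0\|_\cB$, and composing with nondecreasing convex $\Psi$, and with $\varphi$ on the affine maps $f\mapsto\|f(x_j)-\xi_j\|_\Lambda$, makes $\cE$ convex; for a convex functional the vanishing of the G\^{a}teaux derivative is sufficient for a global minimum. The delicate point is that the semi-inner product is non-additive in its second slot, so $[\,\cdot\,,f_0]_\cB$ cannot be treated as a linear functional of $f_0$; the argument must keep $f_0$ fixed and vary only the first slot, where linearity does hold.

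For the zero case the norm fails to be differentiable at $f_0=0$, so I would instead test minimality along rays: $0$ is a minimizer precisely when $t\mapsto\cE(tg)$ has a minimum at $t=0$ for every $g$, i.e. when the right-hand derivative at $0$ is nonnegative. Here $\cE(tg)=\sum_j\varphi(\|tg(x_j)-\xi_j\|_\Lambda)+\lambda\Psi(|t|\,\|g\|_\cB)$, and differentiating from the right gives $\lambda\Psi'(0)\|g\|_\cB-\Re\,T(g)$, the minus sign coming from the conjugate homogeneity $[g(x_j),-\xi_j]_\Lambda=-[g(x_j),\xi_j]_\Lambda$, with $T$ exactly the functional in the statement after (\ref{reproducing}) converts $[g(x_j),\xi_j]_\Lambda$ into $T(g)$. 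Nonnegativity for all $g$ reads $\Re\,T(g)\le\lambda\Psi'(0)\|g\|_\cB$; optimizing over the phase of $g$ turns $\Re\,T(g)$ into $|T(g)|$, and taking the supremum over $\|g\|_\cB=1$ gives $\|T\|_{\cB^*}\le\lambda\Psi'(0)$, which is (\ref{charaeqzero}). Sufficiency again rests on the convexity argument above, which I regard as the crux of the whole proof.
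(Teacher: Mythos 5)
Your proposal is essentially the intended proof: the paper itself gives no details for Theorem \ref{characterizationeq}, deferring in one line to the scalar-valued argument of \cite{ZZjogo}, and what you have written is precisely the correct vector-valued adaptation of that argument --- Giles' formula $\lim_{t\to0}(\|f+tg\|_\cB-\|f\|_\cB)/t=\Re[g,f]_\cB/\|f\|_\cB$ for the G\^{a}teaux derivative of the norm, the reproducing property (\ref{reproducing}) to transport $[g(x_j),u_j]_\Lambda$ into $\cB^*$, the $g\mapsto ig$ phase trick to upgrade $\Re\mu(g)=0$ to $\mu=0$, the one-sided directional derivative $\lambda\Psi'(0)\|g\|_\cB-\Re T(g)$ at the origin, and the observation that (\ref{derivativevanishingat0}) forces $\varphi'(0)=0$ so that the terms with $f_0(x_j)=\xi_j$ (respectively $\xi_j=0$) are differentiable with vanishing derivative, which is exactly what the convention $0/0:=0$ encodes. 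All of these computations check out; note in passing that the subscripts $\|f_0(x_j)-\xi_j\|_\cB$ appearing in (\ref{charaeqnonzero}) are typos for $\|\cdot\|_\Lambda$, as your notation $\|u_j\|_\Lambda$ tacitly corrects.

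The one point of substance is the converse direction, and you have diagnosed it exactly right: with $\varphi,\Psi$ merely continuously differentiable, the ``if'' halves are false in general --- a stationary point of a non-convex functional need not be a global minimizer --- so the theorem implicitly carries the convexity and monotonicity hypotheses of the scalar-valued source, namely $\varphi$ and $\Psi$ convex and nondecreasing. Your repair is the standard and correct one: the subgradient inequality $\|f\|_\cB\ge\|f_0\|_\cB+\Re[f-f_0,f_0]_\cB/\|f_0\|_\cB$ is an immediate consequence of the Cauchy--Schwartz inequality for the semi-inner product, and composing it with convex nondecreasing $\Psi$, together with the analogous inequality in $\Lambda$ at each $x_j$, yields $\cE(f)\ge\cE(f_0)+\Re\,\mu(f-f_0)$ with $\mu$ the left-hand side of (\ref{charaeqnonzero}), so $\mu=0$ gives global minimality; at the origin, convexity of $t\mapsto\cE(tg)$ and $h(1)\ge h(0)+h_+'(0)$ close the zero case. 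Your caution that linearity of the semi-inner product is available only in the first slot, with $f_0$ (respectively $u_j$) frozen in the second, is precisely the ``handle the semi-inner product carefully'' that the paper's one-line proof alludes to. In short: a correct proposal, following the same route as the cited scalar-valued proof, with the added merit of making explicit the convexity hypotheses that the statement omits but cannot do without.
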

\begin{proof}
The proof is similar to that for the scalar-valued RKBS case in \cite{ZZjogo}. One only needs to handle the semi-inner product in vector-valued RKBS carefully.
\end{proof}

In the sequel, we discuss the application of the above theorem to the regularization networks
\begin{equation}\label{regularizationnetwork2}
\min_{f\in\cB}\sum_{j=1}^m\|f(x_j)-\xi_j\|_\Lambda^2+\lambda \|f\|_\cB^2.
\end{equation}
To this end, we say that the point evaluations on $\cB$ at $x_j$, $j\in\bN_m$ are {\it essentially linearly independent} if for all $\eta_j\in\Lambda$, $j\in\bN_m$
$$
\sum_{j=1}^m[f(x_j),\eta_j]_\Lambda=0\mbox{ for all }f\in\cB
$$
necessitates that $\eta_j=0$ for each $j\in\bN_m$. By (\ref{reproducing}), $\delta_{x_j}$, $j\in\bN_m$ are essentially linearly independent if and only if
$$
\sum_{j=1}^m (K(x_j,\cdot)\eta_j)^*=0
$$
implies that $\eta_j=0$ for each $j\in\bN_m$.

\begin{coro}\label{regularizationnetworkcoro}
Suppose that the point evaluations on $\cB$ at $x_j$, $j\in\bN_m$ are essentially linearly independent. Then $f_0$ is the minimizer of the regularization network (\ref{regularizationnetwork2}) if and only if it is of the form (\ref{repreenterfinitedimension}) where the parameters $\eta_j$'s satisfy
\begin{equation}\label{simpleequation}
\lambda \eta_j+f_0(x_j)-\xi_j=0\mbox{ for all }j\in\bN_m.
\end{equation}
\end{coro}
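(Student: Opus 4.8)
The plan is to read Corollary \ref{regularizationnetworkcoro} off as the $\varphi(t)=\Psi(t)=t^2$ instance of the characterization equation in Theorem \ref{characterizationeq}. First I would record that the regularization network (\ref{regularizationnetwork2}) is precisely (\ref{regularization4}) with these two choices, so that $\Psi'(t)/t=\varphi'(t)/t=2$ for $t>0$. Feeding this into the nonzero characterization (\ref{charaeqnonzero}) and cancelling the common factor $2$ turns that equation into
\[
\lambda f_0^*+\sum_{j=1}^m\bigl(K(x_j,\cdot)(f_0(x_j)-\xi_j)\bigr)^*=0 .
\]
For $j$ with vanishing residual the convention $0/0:=0$ and $\bigl(K(x_j,\cdot)\,0\bigr)^*=0$ both contribute nothing, so no exceptional bookkeeping is needed.

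For the ``only if'' direction I would set $\eta_j:=\tfrac1\lambda(\xi_j-f_0(x_j))$, which is exactly (\ref{simpleequation}). Using the homogeneity (\ref{homogeneouskxy}) of $K$ together with the conjugate homogeneity of the duality map $\cJ_\cB$ (and $\lambda>0$ being real), each summand rewrites as $\bigl(K(x_j,\cdot)(f_0(x_j)-\xi_j)\bigr)^*=-\lambda\bigl(K(x_j,\cdot)\eta_j\bigr)^*$, so the displayed equation collapses to $f_0^*=\sum_{j=1}^m\bigl(K(x_j,\cdot)\eta_j\bigr)^*$, which is the representer form (\ref{repreenterfinitedimension}). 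The ``if'' direction is the same computation run backwards: substituting into (\ref{repreenterfinitedimension}) the value of $\eta_j$ forced by (\ref{simpleequation}) reproduces the characterization equation, whence $f_0$ is the minimizer by Theorem \ref{characterizationeq}. The degenerate case $f_0=0$ I would dispatch through (\ref{charaeqzero}): since $\Psi'(0)=0$ that condition reads $\|T\|_{\cB^*}\le0$, i.e. $\sum_{j=1}^m\bigl(K(x_j,\cdot)\xi_j\bigr)^*=0$, and then $\eta_j=\xi_j/\lambda$ simultaneously meets (\ref{repreenterfinitedimension}) and (\ref{simpleequation}). The hypothesis of essential linear independence enters to keep this reduction faithful: by the equivalence noted just before the corollary it forces $\sum_j(K(x_j,\cdot)\eta_j)^*=0\Rightarrow\eta_j=0$, so the coefficient vector is genuinely pinned down and the degenerate possibility ``$f_0=0$ solves a nontrivial problem'' is excluded unless all $\xi_j=0$.

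The one point that needs care, and which I regard as the main obstacle, is that the square loss $\varphi(t)=t^2$ does not literally satisfy the standing hypothesis (\ref{derivativevanishingat0}) of Theorem \ref{characterizationeq}, since $\varphi'(t)/t\equiv2$ does not tend to $0$. I would resolve this by observing that the sole purpose of (\ref{derivativevanishingat0}) is to secure differentiability of $g\mapsto\varphi(\|g\|_\Lambda)$ at residual-free points $f_0(x_j)=\xi_j$; for $\varphi(t)=t^2$ the map $g\mapsto\|g\|_\Lambda^2$ is G\^ateaux differentiable everywhere (its derivative at $0$ vanishes because $\|g\|_\Lambda^2=o(\|g\|_\Lambda)$), and in the characterization equation the constant factor $2$ always multiplies $\bigl(K(x_j,\cdot)(f_0(x_j)-\xi_j)\bigr)^*$, which itself vanishes when the residual does. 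Hence no discontinuity arises and the proof of Theorem \ref{characterizationeq} applies verbatim. As a safeguard I would keep in reserve a direct derivation that bypasses Theorem \ref{characterizationeq} entirely: by the uniform Fr\'{e}chet differentiability of $\cB$ and $\Lambda$ the functional $f\mapsto\sum_{j=1}^m\|f(x_j)-\xi_j\|_\Lambda^2+\lambda\|f\|_\cB^2$ is everywhere G\^ateaux differentiable, its derivative in a direction $h$ equals $2\,\Re\Bigl(h,\ \lambda f^*+\sum_{j=1}^m\bigl(K(x_j,\cdot)(f(x_j)-\xi_j)\bigr)^*\Bigr)_\cB$ via the reproducing property (\ref{reproducing}), and forcing this to vanish for all $h$ (using $h$ and $ih$ to kill real and imaginary parts) yields the same stationarity equation; strict convexity then makes it both necessary and sufficient for the unique minimizer.
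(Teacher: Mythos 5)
Your proof is correct and follows essentially the same route as the paper: specialize Theorem \ref{characterizationeq} to $\varphi(t)=\Psi(t)=t^2$, cancel the common factor $2$ to get $\lambda f_0^*+\sum_{j=1}^m\bigl(K(x_j,\cdot)(f_0(x_j)-\xi_j)\bigr)^*=0$, rewrite it via the homogeneity of $K$ and the conjugate homogeneity of the duality mapping as the representer form with $\eta_j=(\xi_j-f_0(x_j))/\lambda$, and use essential linear independence to pin the coefficients down (the paper likewise disposes of $f_0=0$ by noting that (\ref{charaeqnonzero}) and (\ref{charaeqzero}) coincide for this loss). Your further observation that $\varphi(t)=t^2$ does not literally satisfy the standing hypothesis (\ref{derivativevanishingat0}) of Theorem \ref{characterizationeq} is a legitimate point the paper silently glosses over, and your patch is sound: the quotient $\varphi'(t)/t\equiv 2$ is constant, the zero-residual terms vanish regardless of the convention $0/0:=0$, and your reserve argument by direct G\^ateaux differentiation of $f\mapsto\sum_{j=1}^m\|f(x_j)-\xi_j\|_\Lambda^2+\lambda\|f\|_\cB^2$ recovers the same stationarity equation, with strict convexity making it necessary and sufficient.
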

\begin{proof}
For the regularization network (\ref{regularizationnetwork2}), (\ref{charaeqnonzero}) and (\ref{charaeqzero}) are equivalent to each other when $f_0=0$. By Theorem \ref{characterizationeq}, $f_0$ is the minimizer of (\ref{regularizationnetwork2}) if and only if
\begin{equation}\label{regularizationnetworkcoroeq1}
\lambda f_0^*+\sum_{j=1}^m \left(K(x_j,\cdot)(f_0(x_j)-\xi_j)\right)^*=0.
\end{equation}
Thus, $f_0$ has the form (\ref{repreenterfinitedimension}). Since $\delta_{x_j}$, $j\in\bN_m$ are essentially linearly independent, (\ref{regularizationnetworkcoroeq1}) is equivalent to that the parameters $\eta_j$'s in (\ref{repreenterfinitedimension}) satisfy (\ref{simpleequation}). The proof is complete.
\end{proof}

Similarly, one may substitute the representer theorem into the characterization equations (\ref{charaeqnonzero}) and (\ref{simpleequation}) to reduce the minimization problem to the solving of a system of equations about the parameters $\eta_j$'s. Again, due to the non-additivity of a semi-inner product with respect to its second variable, the resulting equations are generally nonlinear about the parameters. We conduct the reformulation when $\Lambda$ is of finite dimension $n\in\bN$ and $\{e_l^*:l\in\bN_n\}$ forms a basis for $\Lambda^*$. In this case, (\ref{simpleequation}) can be reformulated as
$$
\lambda [\eta_j,e_l]_\Lambda+\biggl[(K(x_j,\cdot)e_l)^*,\sum_{k=1}^m (K(x_k,\cdot)\eta_k)^*\biggr]_{\cB^*}=[\xi_j,e_l],\ \ j\in\bN_m,\ l\in\bN_n.
$$
We shall leave the solving of the resulting non-convex minimization problem and nonlinear equations about the parameters in the representer theorem for future study.


\end{document}